\newtheorem{theorem}{Theorem}
\newtheorem{corollary}[theorem]{Corollary}
\newtheorem{definition}[theorem]{Definition}
\newtheorem{lemma}[theorem]{Lemma}
\newenvironment{proof}[1][Proof]{\textbf{#1.} }{\
\rule{0.5em}{0.5em}}
\newcommand{\ds}{\displaystyle}
\newcommand{\R}{\mathbb{R}}
\newcommand{\Z}{\mathbb{Z}}
\newcommand{\ol}{\overline}
\title{Calculating the output distribution of stack filters that are erosion-dilation cascades, in particular $LULU$-filters}
\author{R. Anguelov$^1$, P.W. Butler$^2$, C.H. Rohwer$^2$, M.
Wild$^2$\\[3pt]
$^1$Department of Mathematics and Applied Mathematics, University of
Pretoria
\\
Pretoria 0002, South Africa\\[3pt]
$^2$Department of Mathematical Sciences,
University of Stellenbosch\\
 Private Bag X1, Matieland 7602, South
Africa}
\date{}
\begin{document}
\maketitle

\begin{abstract}
Two procedures to compute the output distribution $\phi_S$ of certain stack filters $S$ (so called erosion-dilation cascades) are given. One rests on the disjunctive normal form of $S$ and also yields the rank selection probabilities. The other is based on inclusion-exclusion and e.g. yields $\phi_S$ for some important $LULU$-operators $S$. Properties of $\phi_S$ can be used to characterize smoothing properties of $S$.


\end{abstract}

\section{Introduction}

The LULU operators are well known in the nonlinear multiresolution
analysis of sequences. The notation for the basic operators
$L_n$ and $U_n$, where $n\in\mathbb{N}$ is a parameter related to
the window size, has given rise to the name LULU for the theory of
these operators and their compositions. Since the time they were
introduced nearly thirty year ago, while also being used in
practical problems, they slowly led to the development of a
new framework for characterizing, evaluating, comparing and
designing nonlinear smoothers. This framework is based on concepts
like idempotency, co-idempotency, trend preservation, total variation preservation,
consistent decomposition. 

As opposed to the deterministic nature of the above
properties, the focus of this paper is on properties of the LULU
operators in the setting of random sequences. More precisely, this
setting can be described as follows: Suppose that $X$ is a
bi-infinite sequence of random variables $X_i$ $(i\in\mathbb{Z})$
which are independent and with a common (cumulative) distribution
function $F_X(t)$ from $L^1([0,1],[0,1])$.
Let $S$ be a smoother. Then we consider the following two
questions:

1. Find a map $\phi_S:[0,1]\to[0,1]$ such that the common {\it output distribution} $F_{SX}(t)$ of $(SX)_i$ ($i\in\mathbb{Z}$) equals $F_{SX} = \phi_S \circ F_X$. The
function $\phi_S$ is also called \emph{distribution transfer}. 

2. Characterize the smoothing effect an operator $S$ has on a
random sequence $X$ in terms of the properties of the common
distribution of $(SX)_i$ ($i\in\mathbb{Z}$).

With regard to the first question we present a new technique which
one may call "expansion calculus" which uses a shorthand notation
for the probability of composite events and a set of rules for
manipulation. Using this technique we provide new elegant proofs
of the earlier results in \cite{ButlerMSc} for the distribution
transfer of the operators $L_nU_n$ and (dually) $U_nL_n$. The power of this
approach is further demonstrated by deriving the distribution
transfer maps for the alternating sequential filters
$C_n= L_nU_nL_{n-1}U_{n-1}...L_1U_1$ and $F_n=U_nL_nU_{n-1}L_{n-1}...U_1L_1$.

With regard to the second question, we may note that it is
reasonable to expect that a smoother should reduce the standard
deviation of a random sequence. Indeed, for simple distributions
(e.g. uniform) and filters with small window size (three point
average, $M_1$, $L_1U_1$, $U_1L_1$) when the computations can be
carried out a significant reduction of the standard deviation is
observed (for the uniform distribution the mentioned filters
reduce the standard deviation respectively by factors of 3, 5/3,
1.293, 1.293). However, in general, obtaining such results is to a
large extent practically impossible due to the technical
complexity particularly when nonlinear filters are concerned. In
this paper we propose a new\footnote{Related concepts of robustness exist. We touch upon one of them in Section 6.} concept of {\it robustness} which characterizes the
probability of the occurrence of outliers rather then considering
the standard deviation.  Upper
robustness characterizes the probability of positive outliers
while the lower robustness characterizes the probability of
negative outliers. In general, the higher the order of robustness
of a smoother the lower the probability of occurrence of outliers
in the output sequence. In terms of this concept it is easy to
characterize a smoother given its distribution transfer function.

The paper is structured as follows. In the next section we give
the definitions of the LULU operators with some fundamental
properties. The concept of robustness is defined and studied in
Section 3. In Section 4 we show how the inclusion-exclusion principle helps to obtain the distribution transfer function of erosion-dilation cascades, a kind of operator frequently used in Mathematical Morphology. This method is considerably refined in Section 5 where it is applied to LULU-operators. They are particular cases of such cascades. Formulas for the major LULU-operators are obtained explicitly or recursively. Using these results the robustness of these operators is also analyzed. Section 6 proposes to substitute inclusion-exclusion by some novel {\it principle of exclusion} which excels for erosion-dilation cascades that don't allow the refinements of inclusion-exclusion possible for $LULU$-operators.

\section{The basics of the $LULU$ theory}

Given a bi-infinite sequence $x=(x_i)_{i\in\mathbb{Z}}$ and
$n\in\mathbb{N}$ the basic LULU operators $L_n$ and $U_n$ are
defined as follows
\begin{eqnarray}
(L_nx)_i&:=&(x_{i-n} \wedge x_{i-n+1} \wedge \cdots \wedge x_i)
\vee (x_{i-n+1} \wedge \cdots \wedge x_{i+1}) \vee \cdots \vee
(x_i \wedge \cdots \wedge x_{i+n})\label{Ln}\\
(U_nx)_i&:=&(x_{i-n}\vee x_{i-n+1} \vee \cdots \vee x_i) \wedge
(x_{i-n+1} \vee \cdots \vee x_{i+1}) \wedge \cdots \wedge (x_i\vee
\cdots \vee x_{i+n})\label{Un}
\end{eqnarray}
 where $\alpha \wedge \beta : = \min
(\alpha, \beta)$, and $\alpha \vee \beta := \max (\alpha, \beta)$
for all $\alpha, \beta \in \R$. Central to the theory is the
concept of {\bf separator}, which we define below. For every
$a\in\mathbb{Z}$ the operator $E_a:\mathbb{R}^\mathbb{Z}\to
\mathbb{R}^\mathbb{Z}$ given by
\[
(E_ax)_i=x_{i+a}, \quad  i\in\mathbb{Z},
\]
is called a {\bf shift operator}.
\begin{definition}\label{defSep}
An operator $S:\mathbb{R}^\mathbb{Z}\to \mathbb{R}^\mathbb{Z}$ is
called a {\bf separator} if
\begin{enumerate}
\item[(i)]  $S\circ E_a=E_a\circ S,\ a\in\mathbb{Z}$ ;\hspace{3cm}\mbox{ (horizontal shift invariance)}
\item[(ii)] $P(f+c)=P(f)+c,\ f,c\in \mathbb{R}^\mathbb{Z}$; \hspace*{1cm} $c$-
constant function  (vertical shift invariance);
\item[(iii)] $P(\alpha f)=\alpha P(f),\ \alpha\in\mathbb{R},\ \alpha\geq
0, \ f\in\mathbb{R}^\mathbb{Z}$; \hspace*{.5cm} (scale invariance)
\item[(iv)] $P\circ P=P$; \hspace*{5.4cm} (Idempotence)
\item[(v)] $(id-P)\circ (id-P)=id-P$. \hspace{2.3cm} (Co-idempotence)
\end{enumerate}
\end{definition}
The first two axioms in Definition \ref{defSep} and partially the
third one were first introduced as required properties of
nonlinear smoothers by Mallows, \cite{Mallows}. Rohwer further
made the concept of a smoother more precise by using the
properties (i)--(iii) as a definition of this concept. The axiom
(iv) is an essential requirement for what is called a {\it
morphological filter},  \cite{Serra82}, \cite{Serra88},
\cite{Soille}. In fact, a {\bf morphological filter} is exactly an
increasing operator which satisfies (iv). The co-idempotence axiom
(v) in Definition \ref{defSep} was introduced by Rohwer in
\cite{Rohwerbook}, where it is also shown that it is an essential
requirement for operators extracting signal from a sequence. More
precisely, axioms (iv) and (v) provide for consistent separation
of noise from signal in the following sense: Having extracted a
signal $Sx$ from a sequence $x$, the additive residual $(I-S)x$,
the noise, should contain no signal left, that is $S\circ(I-S)=0$.
Similarly, the signal $Sx$ should contain no noise, that is
$(I-S)\circ S=0$. It was shown in \cite{Rohwerbook} that $L_n$,
$U_n$ and their compositions $L_nU_n, U_nL_n$ are separators.

The smoothing effect of $L_n$ on an input sequence is the removal
of picks, while the smoothing effect of $U_n$ is the removal of
pits. The composite effect of the two $LU$-{\it operators} $L_n U_n$ and $U_n L_n$ is that the output
sequence contains neither picks nor pits which will fit in the
window of the operators. These are the so called $n$-monotone
sequences, \cite{Rohwerbook}. Let us recall that a sequence $x$ is
$n${\bf -monotone} if any subsequence of $n+1$ consecutive elements is
monotone. For various technical reasons the analysis is typically
restricted to the set $\mathcal{M}_1$ of absolutely summable
sequences. Let $\mathcal{M}_n$ denote the set of all sequences
$x\in\mathcal{M}_1$ which are $n$-monotone. Then
\[
\mathcal{M}_n= Range(L_nU_n) = Range (U_nL_n)
\]
is the set of signals.

The power of the $LU$-operators as separators is further
demonstrated by their trend preservation properties. Let us
recall, see \cite{Rohwerbook}, that an operator is called
{\bf neighbor trend preserving} if $(Sx)_i\leq (Sx)_{i+1}$
whenever $x_i\leq x_{i+1}$, $i\in\mathbb{N}$.
An operator $S$ is
{\bf fully trend preserving} if both $S$ and $I-S$ are neighbor
trend preserving. The operators $L_n$, $U_n$ and all their
compositions are fully trend preserving. 
With the total variation of a sequence,
\[
TV(x) \quad = \quad \sum_{i\in\mathbb{N}}|x_i-x_{i+1}|,\ \ x\in \mathcal{M}_1
\]
a generally accepted measure for the amount of contrast 
present, since it is a semi-norm on $\mathcal{M}_1$, any
separation may only increase the total variation. More precisely,
for any operator $S: \mathcal{M}_1 \to \mathcal{M}_1$ we have
\begin{equation}\label{TVinequ} TV(x) \quad  \le \quad TV(Sx) +
TV((I-S)x).
\end{equation}
All operators $S$ that are fully trend preserving have variation preservation, in that
\begin{equation}\label{TVPre} TV(x)
\quad = \quad TV(Sx) + TV((I-S)x).
\end{equation}
We mention these properties because they provide but few of the motivation for studying the robustness of operators, when the popular medians are optimal in that respect. We intend to show that some $LULU$-composition are nearly as good as the medians, but have superiority most important aspects.

An operator $S$ satisfying property (\ref{TVPre}) is called
{\bf total variation preserving}, \cite{RohwerQM2002}. As
mentioned already, the $LU$-operators are total variation
preserving.

\section{Distribution transfer and degree of robustness of a smoother}

Suppose that $X$ is a bi-infinite sequence of random variables
$X_i$ $(i\in\mathbb{Z})$ which are independent and with a common
(cumulative) distribution function $F_X$. Let $S$ be a smoother.
As stated in the introduction we seek a function
$\phi_S:[0,1]\to[0,1]$, called a \emph{distribution transfer function}
such that
\begin{equation}\label{transfer}
F_{SX}=\phi_S\circ F_X
\end{equation}
is the common distribution of $(SX)_i$ ($i\in\mathbb{Z}$). We
should note that for an arbitrary smoother the existence of such a
distribution transfer function is not obvious. However, for the
smoothers typically considered in nonlinear signal processing (i.e. stack filters of which the LULU operators are particular cases) such a function
does not only exist but it is a polynomial. For example, it is
shown in \cite{Nodes} that the distribution transfer function of
the ranked order operators
\[
(R_{nk}x)_i=\mbox{ the $k$th smallest value of
}\{x_{i-n},...,x_{i+n}\}
\]
is given by
\begin{equation}\label{transferRank}
\phi_{R_{nk}}(p)=\sum\limits_{j=k}^{2n+1}{2n+1 \choose j}
p^j(1-p)^{2n+1-j}.
\end{equation}
The popular median smoothers $M_n$, $n\in\mathbb{N}$, are
particular cases of the ranked order operators, namely
$M_n=R_{n,n+1}$. Hence we have
\begin{equation}\label{transferMedian}
\phi_{M_n}(p)=\sum\limits_{j=n+1}^{2n+1}{2n+1 \choose
j}p^j(1-p)^{2n+1-j}.
\end{equation}
Note that in terms of (\ref{transfer}) the common distribution
function of $(M_nX)_i$, $i\in \mathbb{Z}$, is
\[
F_{M_nX}(t)=\sum\limits_{j=n+1}^{2n+1}{2n+1 \choose
j}F^j_X(t)(1-F_X(t))^{2n+1-j},\ t\in\mathbb{R}.
\]
Using that
\begin{equation}\label{MedianTransDer}
\frac{d}{dz}\phi_{M_n}(p)=(2n+1){2n \choose n}p^n(1-p)^n
\end{equation}
its density is
\[
f_{M_nX}(t)=\frac{d}{dz}\phi_{M_n}(F_X(t))f_X(t)=(2n+1){2n \choose
n}F^n_X(t)(1-F_X(t))^nf_X(t)
\]
where $f_X(t)=\frac{d}{dt}F_X(t)$, $t\in\mathbb{R}$, is the common
density of $X_i$, $i\in \mathbb{Z}$. The distribution of the
output sequence of the basic smoothers $L_n$ and $U_n$ is derived
in \cite{Rohwerbook}.
Equivalently these results can be formulated in
terms of distribution transfer. More precisely we have
\begin{eqnarray}
\phi_{L_n}(p)&=&1-(n+1)(1-p)^{n+1}+n(1-p)^{n+2}\label{TransLn}\\
\phi_{U_n}(p)&=&(n+1)p^{n+1}-np^{n+2}\label{TransUn}
\end{eqnarray}

A primary aim of the processing of signals through nonlinear
smoothers is the removal of impulsive noise. Therefore, the power
of such a smoother can be characterized by how well it eliminates
outliers in a random sequence. The concepts of robustness of a
smoother introduced below are aimed at such characterization.

\begin{definition} A smoother
$S:\mathbb{R}^\mathbb{Z}\to\mathbb{R}^\mathbb{Z}$ is called {\bf lower
robust} of order $r$ if there exists a constant $\alpha>0$ such
that for every bi-infinite sequence $X$ of identically distributed
random variables $X_i$ ($i\in\mathbb{Z}$) there exists
$t_0\in\mathbb{R}$ such that $P(X_i<t)<\varepsilon$ implies
$P((SX)_i<t)<\alpha\varepsilon^k$ for all $t<t_0$ and
$\varepsilon>0$.

Similarly, a smoother
$S:\mathbb{R}^\mathbb{Z}\to\mathbb{R}^\mathbb{Z}$ is called {\bf upper
robust} of order $r$ if there exists a constant $\alpha>0$ such
that for every bi-infinity sequence $X$ of identically distributed
random variables $X_i$ ($i\in\mathbb{Z}$) there exists
$t_0\in\mathbb{R}$ such that $P(X_i>t)<\varepsilon$ implies
$P((SX)_i>t)<\alpha\varepsilon^k$ for all $t>t_0$ and
$\varepsilon>0$.

A smoother which is both lower robust of order $r$ and upper
robust of order $r$ is called {\bf robust} of order $k$.
\end{definition}

The reasoning behind these concepts is simple:  If a distribution
density is heavy tailed, there is a probability $\varepsilon$ that
the size of a random variable is excessively large  (larger than
$t$) in absolute value. Using a non-linear smoother we would aim
to restrict this to an acceptable probability
$\alpha\varepsilon^k$ that such an excessive value can appear in
$SX$, by choosing a smoother with the order of robustness $k$.

Clearly there is a general problem of smoothing: a trade-off
to be made between making a smoother more robust, and the
(inevitable) damage to the underlying signal preservation. (A
smoother clearly cannot create information, but only selectively
discard it.) This is fundamental. There are two main reasons for
using one-sided robustness: Firstly, the unreasonable pulses often
are only in one direction, as in the case of "glint" in signals
reflected from objects with pieces of perfect reflectors, and
there clearly are no reflections of negative intensity
possible.  Secondly, we may chose smoothers that are not
symmetric, as are the $LU$-operators, for reasons that are of
primary importance. In this case the robustness is determined from
the sign of the impulse. 

The robustness of a smoother can be characterized through its
distribution transfer function as stated in the theorem below.

\begin{theorem}\label{theoRobust}
Let the smoother $S$ have a distribution transfer function
$\phi_S$. Then
\begin{itemize}\item[a)]$S$ is lower robust of order $r$ if and only if
$\phi_S(p)=O(p^r)$ as $p\to 0$. \item[b)]$S$ is upper robust of
order $r$ if and only if $\phi_S(1-p)-1=O(p^r)$ as $p\to 0$.
\end{itemize}
\end{theorem}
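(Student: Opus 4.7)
The plan is to translate both robustness conditions, which are stated in terms of tail probabilities $P(X_i<t)$ and $P((SX)_i<t)$ (or their upper analogues), into asymptotic statements about $\phi_S$ near $0$ or $1$ via the defining relation $F_{SX}=\phi_S\circ F_X$. Since the distribution transfer maps of interest are polynomials, $\phi_S$ is continuous, so the identity extends from the right-continuous CDFs to their left limits, yielding $P((SX)_i<t)=\phi_S(P(X_i<t))$. This identity is the bridge between the probabilistic definition of robustness and the analytic behaviour of $\phi_S$.

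For part (a), I would argue the two directions separately. For the \emph{if} direction, suppose $\phi_S(p)\le\alpha_0 p^r$ for $p\in[0,p_0]$. Given any i.i.d.\ sequence $X$, pick $t_0$ with $F_X(t_0)\le p_0$, possible because $F_X(t)\to 0$ as $t\to -\infty$. Then for any $t<t_0$ and any $\varepsilon>0$ with $p:=P(X_i<t)<\varepsilon$, monotonicity forces $p\le p_0$, hence $P((SX)_i<t)=\phi_S(p)\le\alpha_0 p^r<\alpha_0\varepsilon^r$, establishing lower robustness with $\alpha=\alpha_0$. For the \emph{only if} direction, specialise to an $X$ whose CDF is continuous and strictly increasing on $\mathbb{R}$, e.g.\ standard normal; the hypothesis supplies a threshold $t_0$. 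For each $t<t_0$ set $p=F_X(t)$; applying the implication to every $\varepsilon>p$ gives $\phi_S(p)<\alpha\varepsilon^r$, and letting $\varepsilon\downarrow p$ yields $\phi_S(p)\le\alpha p^r$. Since $F_X$ maps $(-\infty,t_0)$ onto a right neighbourhood of $0$, this gives $\phi_S(p)=O(p^r)$ as $p\to 0^+$.

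Part (b) is obtained by the dual argument applied to upper tails. Writing $q=1-F_X(t)=P(X_i>t)$, which tends to $0$ as $t\to+\infty$, the transfer identity gives $P((SX)_i>t)=1-\phi_S(1-q)$, so upper robustness of order $r$ becomes exactly $1-\phi_S(1-q)=O(q^r)$, i.e.\ $\phi_S(1-p)-1=O(p^r)$ as $p\to 0^+$. The only technical subtlety is careful bookkeeping between strict and non-strict inequalities when passing $\varepsilon\downarrow p$, together with the use of continuity of $\phi_S$ to extend $F_{SX}=\phi_S\circ F_X$ to left limits; once these are in place, each direction reduces to a direct substitution exploiting the monotonicity of $F_X$ and the freedom to choose the input distribution in the \emph{only if} direction.
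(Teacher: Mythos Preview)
Your proposal is correct and follows essentially the same route as the paper's proof: translate the tail-probability conditions through $F_{SX}=\phi_S\circ F_X$, use $F_X(t)\to 0$ to find the threshold $t_0$ in the \emph{if} direction, and reverse the argument for the \emph{only if} direction. You are actually more careful than the paper in two respects---you address the strict-versus-nonstrict inequality issue via continuity of $\phi_S$, and you spell out the converse by choosing a concrete continuous, strictly increasing $F_X$ and letting $\varepsilon\downarrow p$, whereas the paper simply asserts that ``the argument can be reversed''.
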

\begin{proof}Points a) and b) are proved using similar arguments. Hence we prove only
a). Let $\phi_S(p)=O(p^r)$ as $p\to 0$. This means that there
exists $\alpha>0$ and $\delta>0$ such that $\phi_S(p)<\alpha
p^k$ for all $p\in [0,\delta)$. Let $X$ be a sequence of
identically distributed random variables with common distribution
function $F_X$. Since $\lim_{t\to -\infty}F_X(t)=0$, there exists $t_0$
such that $F_X(t_0)<\delta$. Let $t<t_0$ and $\varepsilon>0$ be
such that $P(X_i<t)<\varepsilon$. The monotonicity of $F_X$
implies that $F_X(t)\in[0,\delta)$. Then
\[
P((SX)_i<t)=F_{SX}(t)=\phi_S(F_X(t))<\alpha (F_X(t))^k<\alpha
\varepsilon^k,
\]
which proves that $S$ is lower robust of order $k$. It is easy to
see that the argument can be reversed so that the stated condition
is also necessary.
\end{proof}

In the common case when the distribution transfer function is a
polynomial,  conditions a) and b) can be formulated in a much
simpler way as given in the next corollary.

\begin{corollary}\label{corRobust}
Let the distribution transfer function of a smoother $S$ be a
polynomial $\phi_S$ . Then
\begin{itemize}\item[a)]$S$ is lower robust of order $r$ if and only if
$p=0$ is a root of order $r$ of $\phi_S$.

\item[b)]$S$ is upper robust of order $r$ if and only if $p=1$ is
a root of order $r$ of $\phi_S-1$.
\end{itemize}
\end{corollary}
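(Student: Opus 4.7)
The plan is to reduce the corollary to Theorem \ref{theoRobust} and then translate the two Landau-$O$ conditions into root-order conditions on the polynomial $\phi_S$.

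For part (a), I would invoke Theorem \ref{theoRobust}(a) to obtain the equivalent analytic condition $\phi_S(p) = O(p^r)$ as $p \to 0$, and then use the elementary fact that for a polynomial $\phi_S(p) = \sum_k a_k p^k$ this $O$-condition is equivalent to the vanishing $a_0 = a_1 = \cdots = a_{r-1} = 0$, i.e.\ to $0$ being a root of $\phi_S$ of order (at least) $r$. One direction is immediate: if these coefficients vanish, factor out $p^r$ and note that the remaining polynomial factor is bounded on $[0,1]$. The converse follows because if $j < r$ is the smallest index with $a_j \neq 0$, then $\phi_S(p)/p^r \sim a_j p^{j-r} \to \infty$ as $p \to 0$, contradicting the $O$-bound.

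For part (b), I would apply Theorem \ref{theoRobust}(b) and substitute $q = 1-p$, which transforms the condition $\phi_S(1-p) - 1 = O(p^r)$ as $p \to 0$ into $(\phi_S - 1)(q) = O((1-q)^r)$ as $q \to 1$. The same polynomial argument, now applied to $\phi_S - 1$ expanded in powers of $(q-1)$ via its (finite) Taylor series at $q = 1$, shows that this holds if and only if the first $r$ Taylor coefficients at $q = 1$ vanish, which is precisely the statement that $q = 1$ is a root of order (at least) $r$ of $\phi_S - 1$.

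There is essentially no obstacle here; the corollary is a textbook polynomial-algebra restatement of Theorem \ref{theoRobust}. The one tacit convention worth spelling out is that \emph{root of order $r$} should be read as \emph{root of order at least $r$}, which matches the usage of \emph{robust of order $r$} (any smoother robust of order $r$ is automatically robust of every order $r' \le r$, so the two hierarchies correspond cleanly).
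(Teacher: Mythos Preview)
Your proposal is correct and follows exactly the route the paper intends: the paper states this corollary without proof, treating it as an immediate polynomial-algebra reformulation of Theorem \ref{theoRobust}. Your write-up simply supplies the (standard) details the paper omits, including the sensible remark that ``order $r$'' should be read as ``order at least $r$.''
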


Using the distribution transfer functions given in (\ref{TransLn})
and (\ref{TransUn}) it follows from Corollary \ref{corRobust} that
$U_n$ is lower robust of order $n+1$ and that $L_n$ is upper
robust of order $n+1$.

The robustness of the median filter $M_n$ can be obtained from
(\ref{transferMedian}). Obviously $p=0$ is a root of order $n+1$.
Furthermore, $\phi_{M_n}(1)=1$. Then using also that $p=1$ is a
root of order $n$ of $\frac{d}{dz}\phi_{M_n}$, see
(\ref{MedianTransDer}), we obtain that $p=1$ is a root of order
$n+1$ of $\phi_{M_n}-1$. Therefore, $M_n$ is robust of order
$n+1$.

Clearly with symmetric smoothers, in that $S(-x)=-S(x)$,  the
concepts of lower and upper robustness are not needed, as is the
case for example with $M_n$. However, we have to recall in this
regard that the operators $L_n$, $U_n$ and their compositions,
which are the primary subject of our investigation,  are not
symmetric. A useful feature of the lower and upper robustness is
that it can be induced through the point-wise defined partial order
between the operators. Let us recall that given the maps
$A,B:\mathbb{R}^\mathbb{Z}\to \mathbb{R}^\mathbb{Z}$, the relation
$A\leq B$ means that $Ax\leq Bx$ for all
$x\in\mathbb{R}^\mathbb{Z}$.

\begin{theorem}\label{theoOrder1}Let $A,B:\mathbb{R}^\mathbb{Z}\to \mathbb{R}^\mathbb{Z}$ be smoothers.
If $A \leq B$ then $\phi_B \leq \phi_A$.
\end{theorem}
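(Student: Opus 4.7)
The plan is to convert the pointwise domination $A \le B$ into stochastic domination of the output sequences, and then read off the inequality of the transfer functions on all of $[0,1]$ by choosing the input distribution cleverly.

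First I would fix a bi-infinite sequence $X=(X_i)_{i\in\Z}$ of i.i.d.\ random variables with common distribution $F_X$. Since $A\le B$ holds pointwise on every deterministic sequence, it holds on every sample path of $X$, and hence $(AX)_i \le (BX)_i$ almost surely for each $i$. This gives the event inclusion $\{(BX)_i \le t\}\subseteq \{(AX)_i \le t\}$ for every $t\in\R$, whence $F_{BX}(t)\le F_{AX}(t)$ for every $t$.

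Now I would invoke the defining relation (\ref{transfer}) on both sides: $F_{AX}=\phi_A\circ F_X$ and $F_{BX}=\phi_B\circ F_X$. The previous step becomes $\phi_B(F_X(t))\le \phi_A(F_X(t))$ for all $t\in\R$. To upgrade this to an inequality on $[0,1]$, I would specialize to the uniform distribution on $[0,1]$, so that $F_X(t)=t$ on $[0,1]$ and the range of $F_X$ is all of $[0,1]$. Substituting $p=F_X(t)$ then yields $\phi_B(p)\le \phi_A(p)$ for every $p\in[0,1]$, which is the assertion.

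The only subtle point is the specialization step: we are implicitly using that the map $\phi_S$ depends only on $S$ and not on the input distribution, so that the inequality derived from one particular $F_X$ gives an inequality of the functions themselves. This is built into the hypothesis that both smoothers possess a distribution transfer function in the sense of (\ref{transfer}), so no genuine obstacle arises; the rest of the argument is a short monotonicity calculation.
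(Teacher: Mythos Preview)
Your proof is correct and follows essentially the same approach as the paper: the paper also specializes to an i.i.d.\ sequence uniformly distributed on $[0,1]$, uses that $A\le B$ forces $P((BX)_i\le t)\le P((AX)_i\le t)$, and reads off $\phi_B(p)\le\phi_A(p)$ via the defining relation $F_{SX}=\phi_S\circ F_X$. Your version is simply a bit more explicit about the event inclusion and about why the uniform choice suffices to cover all $p\in[0,1]$.
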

\begin{proof}
Let $X$ be a sequence of independent random variables $X_i$ ($i\in\mathbb{Z}$) uniformly distributed on $[0,1]$ . Let $p\in[0,1]$. If $t$ is such that $p = F_X(t)$ then
\begin{eqnarray*}
\phi_B(p)&=&\phi_B(F_X(t))\ =\ F_{BX}(t)\ =\ P((BX)_i\leq t)\\
&\leq&P((AX)_i\leq t)\ =\ F_{AX}(t)\ =\ \phi_A(F_X(t))\ =\
\phi_A(p).
\end{eqnarray*}
\end{proof}

As a direct consequence of Theorem \ref{theoOrder1} and Theorem
\ref{theoRobust} we obtain the following theorem.

\begin{theorem}\label{theoOrder2}
Let $A,B:\mathbb{R}^\mathbb{Z}\to \mathbb{R}^\mathbb{Z}$ be smoothers such
that $A\leq B$. Then
\begin{itemize}
\item[a)] If $A$ is lower robust to the order $k$, then so is $B$.

\item[b)] If $B$ is upper robust to the order $k$, then so is $A$.

\end{itemize}
\end{theorem}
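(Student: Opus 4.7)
The plan is to deduce the result as a direct two-step chain: translate the ordering $A \leq B$ between operators into an ordering between their distribution transfer functions via Theorem \ref{theoOrder1}, and then read off the robustness conclusions from the asymptotic characterizations in Theorem \ref{theoRobust}. The key ancillary observation is that each $\phi_S$ is a map $[0,1] \to [0,1]$, so the quantities involved in the big-$O$ comparisons are always nonnegative.

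More concretely, I would first apply Theorem \ref{theoOrder1} to obtain $\phi_B(p) \leq \phi_A(p)$ for every $p \in [0,1]$, so that in fact $0 \leq \phi_B(p) \leq \phi_A(p) \leq 1$. For part (a), the hypothesis together with Theorem \ref{theoRobust}(a) gives $\phi_A(p) = O(p^k)$ as $p \to 0$. Sandwiching, $0 \leq \phi_B(p) \leq \phi_A(p) = O(p^k)$, hence $\phi_B(p) = O(p^k)$, and Theorem \ref{theoRobust}(a) applied in the opposite direction yields that $B$ is lower robust of order $k$.

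For part (b), the hypothesis together with Theorem \ref{theoRobust}(b) gives $1 - \phi_B(1-p) = O(p^k)$ as $p \to 0$. Using $\phi_B(1-p) \leq \phi_A(1-p) \leq 1$, we get $0 \leq 1 - \phi_A(1-p) \leq 1 - \phi_B(1-p) = O(p^k)$, so $\phi_A(1-p) - 1 = O(p^k)$, and Theorem \ref{theoRobust}(b) now delivers upper robustness of order $k$ for $A$.

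I don't foresee a genuine obstacle; the only point that needs a little care is the bookkeeping of the direction in which the inequality $\phi_B \leq \phi_A$ is useful. This is precisely what forces lower robustness to transfer \emph{upward} along $A \leq B$ (controlling the small values of $\phi_B$ by those of $\phi_A$) while upper robustness transfers \emph{downward} (controlling $1 - \phi_A$ by $1 - \phi_B$). Both halves close with the ``if'' direction of Theorem \ref{theoRobust}, so no new estimates are needed.
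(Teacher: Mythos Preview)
Your proposal is correct and matches the paper's own argument, which simply states that the theorem is a direct consequence of Theorem~\ref{theoOrder1} and Theorem~\ref{theoRobust}. You have merely spelled out that two-step chain explicitly, including the sandwiching that makes the big-$O$ comparisons go through.
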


Using Theorem \ref{theoOrder2} one can derive statements about
the lower robustness and the upper robustness of the $LU$-operators:
\begin{equation}\label{LnMnUnineq}
U_n L_n\leq M_n\leq L_nU_n
\end{equation}
Therefore, $U_n L_n$ inherits the upper-robustness of $M_n$, while
$L_nU_n$ inherits the lower-robustness of $M_n$. More precisely
\begin{eqnarray}
&\bullet&U_n L_n \mbox{ is upper robust of order }n+1;\hspace{5cm}\label{UnLnupperrobust}\\
&\bullet&L_n U_n \mbox{ is lower robust of order
}n+1 \nonumber 
\end{eqnarray}

One may expect that, since $L_n$ is upper robust of order $n+1$ and
$U_n$ is lower robust of order $n+1$, their compositions
should be both lower and upper robust of order $n+1$. However,
as we will see later, this is not the case. The problem is the
following. The definition of robustness requires that the random
variables in the sequence $X$ are identically distributed but they
are not necessarily independent. However, the distribution
transfer functions $\phi_{L_n}$ and $\phi_{U_n}$ are derived under
the assumption of such independence. Noting that entries in the
sequences $L_nX$ are not independent, it becomes clear that the
common distribution of $U_nL_nX$ cannot be obtained by applying
$\phi_{U_n}$ to $F_{L_nX}$. More generally, since the distribution
transfer functions are derived for sequences of independent
identically distributed random variables the equality
$\phi_{AB}=\phi_A\circ\phi_B$ does not hold for arbitrary
operators $A$ and $B$. Therefore the order of robustness of $B$ is
not necessarily preserved by the composition $AB$.

Observe that another concept of robustness is introduced in [10]. Other than Definition 2 it only applies to stack filters. The concept is similar in that it also based on certain probabilities (in this case ``selection probabilities'').

\section{The output distribution of arbitrary erosion-dilation cascades}

Here we present a method for obtaining output distributions of so called erosion-dilation cascades (defined below). It essentially uses the inclusion-exclusion principle for the probability of simultaneous events. For convenience we recall this principle below. For $n =2$ the easy proof will be given along the way.

\begin{lemma}
For any random  variables  $Z_1, Z_2 \cdots Z_n$ it holds that

$\begin{array}{lll} P(Z_1, \cdots, Z_n \leq t) & = & 1-  \ds\sum_{i=1}^n P(Z_i > t) + \ds\sum_{1\leq i < j \leq n} P(Z_i, Z_j > t) \\
\\
& & - \ds\sum_{1 \leq i < j < k \leq n} P (Z_i, Z_j, Z_k > t) \ + \  \cdots \ + \ (-1)^n P(Z_1, \cdots, Z_n > t) \end{array}$
\end{lemma}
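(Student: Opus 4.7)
The plan is to apply the standard inclusion--exclusion identity to the events $A_i := \{Z_i > t\}$ for $i = 1, \ldots, n$. Note that $\{Z_1, \ldots, Z_n \leq t\}$ is precisely the intersection $\bigcap_{i=1}^n A_i^c$, so the lemma is just the inclusion--exclusion formula for the probability of such an intersection, rewritten in terms of the tail events $\{Z_i > t\}$.

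The slickest route, and one that foreshadows the ``expansion calculus'' advertised in the introduction, is to work at the level of indicator functions. Since $\mathbf{1}_{A_i^c} = 1 - \mathbf{1}_{A_i}$ and the product of indicators coincides with the indicator of the intersection, one has
\[
\mathbf{1}_{\bigcap_{i=1}^n A_i^c} \;=\; \prod_{i=1}^n (1 - \mathbf{1}_{A_i}).
\]
Expanding the product by distributivity decomposes it as a signed sum indexed by the subsets $S \subseteq \{1, \ldots, n\}$:
\[
\prod_{i=1}^n (1 - \mathbf{1}_{A_i}) \;=\; \sum_{S \subseteq \{1, \ldots, n\}} (-1)^{|S|} \prod_{i \in S} \mathbf{1}_{A_i} \;=\; \sum_{S} (-1)^{|S|} \mathbf{1}_{\bigcap_{i \in S} A_i}.
\]
Taking expectations on both sides, and using linearity together with $\mathbb{E}[\mathbf{1}_B] = P(B)$, one obtains
\[
P \Big( \bigcap_{i=1}^n A_i^c \Big) \;=\; \sum_{S \subseteq \{1, \ldots, n\}} (-1)^{|S|} P \Big( \bigcap_{i \in S} A_i \Big).
\]
Grouping the $2^n$ terms by the cardinality $|S| = 0, 1, 2, \ldots, n$ then recovers precisely the alternating sum displayed in the statement.

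An alternative is induction on $n$: the base case $n = 2$ is simply $P(A_1^c \cap A_2^c) = 1 - P(A_1 \cup A_2) = 1 - P(A_1) - P(A_2) + P(A_1 \cap A_2)$, which the paper already signals will be established ``along the way''. The inductive step splits $\bigcap_{i=1}^n A_i^c = \big(\bigcap_{i=1}^{n-1} A_i^c\big) \cap A_n^c$, uses $P(B \cap A_n^c) = P(B) - P(B \cap A_n)$, and applies the hypothesis to both resulting probabilities. There is no genuine obstacle in either approach; the only point that needs care is checking that the $2^n$ terms in the expansion get grouped correctly by $|S|$ so that the binomial layout of the stated formula emerges. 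The indicator-expansion route is preferable because that grouping is automatic and the proof is essentially a one-liner.
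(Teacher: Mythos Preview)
Your proof is correct. The paper does not actually give a proof for general $n$; it only treats the case $n=2$ ``along the way'' in Example~1 via the splitting
\[
P(Z_0,Z_1\leq t)=P(Z_0\leq t)-P(Z_0\leq t,\,Z_1>t)=P(Z_0\leq t)-\bigl(P(Z_1>t)-P(Z_0,Z_1>t)\bigr),
\]
which is exactly the identity $P(B\cap A_n^c)=P(B)-P(B\cap A_n)$ underlying your alternative inductive argument. So your induction sketch matches the paper's approach in spirit, while your indicator-function expansion supplies a clean general proof that the paper itself omits; the latter has the advantage of making the grouping by $|S|$ automatic, whereas the paper's one-variable-at-a-time splitting would require an explicit induction to reach arbitrary $n$.
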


Let us recall that in the general setting of Mathematical Morphology \cite{Serra88} the basic operators $L_n$ and $U_n$ are morphological opening and closing respectively. As such they are compositions of an erosion and a dilation. More precisely, for a sequence $x=(x_i)_{i\in\mathbb{Z}}$ we have

\hspace*{4cm} $\begin{array}{lll} (L_nx)_i & : = & \left( \bigvee^n \left( \bigwedge^n x\right) \right)_i \\
\\
(U_nx)_i &: =& \left( \bigwedge^n \left( \bigvee^n x\right) \right)_i \end{array}$

where
\hspace*{4cm} $\begin{array}{lll} \left( \bigwedge^n x \right)_i & : = & x_{i-n} \wedge x_{i-n+1} \wedge \cdots \wedge x_i\end{array} $

is an {\it erosion} with structural element $W=\{-n,-n-1,...,1,0\}$ and

\hspace*{4cm} $\begin{array}{lll}
\left( \bigvee^n x\right)_i &: = & x_i \vee x_{i+1} \vee \cdots \vee x_{i+n}
\end{array}$

is a {\it dilation}  with {\it structural element} $W'=\{0,1,...,n\}$.  Generalizing the $LU$-operators $L_nU_n$ and $U_nL_n$, call a $LULU$-{\it operator} any composition of the basic smoothers $L_n$ and $U_n$, such as $L_3U_4L_2U_1 U_5$. In particular, each $LULU$-operator is a composition of dilations and erosions, that is, an {\it erosion-dilation cascade} (EDC). More generally, each alternating sequential filter (ASF), which by definition \cite{Heijmans} is a composition of morphological openings and closings with structural elements of increasing size, is a EDC with the extra property of featuring the same number of erosions and dilations. 

We will demonstrate our method on two examples of EDC's - the first in one dimension, the second in two dimensions. This method is considerably refined in the next section.

\textbf{Example 1.} Consider $S: = \bigvee^1 \bigwedge^2 \bigvee^3$. It is a  cascade of an erosion $\bigwedge^2$ and dilations $\bigvee^1, \bigvee^3$ (but not an ASF).
To compute the distribution transfer of $S$, let $X$ be a bi-infinite sequence of independent identically distributed random variables $X_i$. Put
\begin{equation}
Y_i = \left( \bigvee^3X\right)_i, \quad  Z_i : = \left( \bigwedge^2Y\right)_i, \quad  A_i : = \left( \bigvee^1Z\right)_i.\label{13}
\end{equation}
Thus $Y, Z, A$ are again bi-infinite sequences of identically distributed (though dependent) random variables. Let $t\in\mathbb{R}$ and $p=F_X(t)$. Then
\begin{eqnarray*}
\phi_S(p) & =& F_{SX}(t)\ =\ F_A(t)\ =\  P(A_0 \leq t) \\
\\
& = & P(Z_0 \vee Z_1 \leq t) \ = \ P(Z_0 \leq t \quad \mbox{and} \quad Z_1 \leq t)
\end{eqnarray*}

In order to reduce the $Z_i$'s to the $Y_i$'s we switch all $\leq t$ to $> t$ by using exclusion-inclusion (the case $n =2$ in Lemma 7):
\begin{eqnarray*}
P(Z_0, Z_1 \leq t) & =& P(Z_0 \leq t) - P(Z_0 \leq t, Z_1 > t) \\
\\
&= & P(Z_0 \leq t)- \ds\left( \ P (Z_1 > t) - P(Z_1, Z_0 > t) \ \right) \\
\\
&= & 1 - P(Z_0 > t) - P(Z_1 > t) + P(Z_1, Z_0 > t)
\end{eqnarray*}

Since {\it our} $Z_i$'s are identically distributed we have $P(Z_0 > t)  = P(Z_1 > t)$ and hence

\begin{eqnarray*} \phi_S (p) & = & 1 - 2 P(Z_0> t) + P(Z_1, Z_0 > t) \\
\\
&= & 1 - 2 P(Y_{-2} \wedge Y_{-1} \wedge Y_0 > t) + P(Y_{-1} \wedge Y_0 \wedge Y_1, \  Y_{-2} \wedge Y_{-1} \wedge Y_0 > t) \\
\\
& = & 1 - 2P(Y_{-2}, Y_{-1}, Y_0 > t) + P(Y_{-2}, Y_{-1}, Y_0, Y_1 > t)\\
\\
& =& 1-2P (Y_0, Y_1, Y_2 > t) + P(Y_0, Y_1, Y_2, Y_3 > t) \end{eqnarray*}

By the dual of Lemma 7 and because e.g. $P(Y_0, Y_1 \leq t) = P(Y_1, Y_2 \leq t) = P(Y_2, Y_3 \leq t)$ we get

\begin{eqnarray*}
\phi_S(p) &= & 1-2 \ds\left( \ 1-3P(Y_0 \leq t) + 2P(Y_0, Y_1 \leq t ) + P(Y_0, Y_2 \leq t) - P(Y_0, Y_1, Y_2 \leq t) \ \right)\\
\\
 & & + \ds\left( \ 1 - 4P(Y_0 \leq t) + 3P(Y_0, Y_1 \leq t) + 2 P(Y_0, Y_2 \leq t) + P(Y_0, Y_3 \leq t) - 2P(Y_0, Y_1, Y_2 \leq t)  \ \right.\\
\\
& & \ds\left. - 2P(Y_0, Y_1, Y_3 \leq t) + P(Y_0, Y_1, Y_2, Y_3 \leq t) \ \right) \\
\\
& = & 2P(Y_0 \leq t) - P(Y_0, Y_1 \leq t) + P(Y_0, Y_3 \leq t) - 2P(Y_0, Y_1, Y_3 \leq t) + P(Y_0, Y_1, Y_2, Y_3 \leq t)\\
\\
& = & 2P(X_0, X_1, X_2, X_3 \leq t) - P(X_0, X_1, X_2, X_3, X_4 \leq t) + P(X_0, X_1, \cdots, X_6 \leq t) \\
\\
& & - 2P(X_0, X_1, \cdots X_6 \leq t) + P(X_0, X_1, \cdots, X_6 \leq t)\\
\\
& = & 2p^4-p^5+p^7 -2p^7 + p^7\\
\\
& = & 2p^4-p^5
\end{eqnarray*}

\textbf{Example 2.} Let $S$ be an opening on $\R^{\Z \times \Z}$ with defining structural element a $2 \times 2$ square. Let now $X$ be an infinite $2$-dimensional array of independent identically distributed random variables $X_{(i,j)}$ where $(i,j)$ ranges over $\Z \times \Z$. In order to derive the output distribution of $S$ we put

\begin{eqnarray*}
Y_{(i,j)} & : = & X_{(i,j)} \wedge X_{(i-1, j)} \wedge X_{(i,j+1)} \wedge X_{(i-1, j+1)} \\
\\
Z_{(i,j)} & :  = & Y_{(i,j)} \vee Y_{(i+1, ,j)} \vee Y_{(i, j-1)} \vee Y_{(i+1, j-1)}
\end{eqnarray*}

Let $t\in\mathbb{R}$ and $p=F_X(t)$.
The output distribution of $S$ is
\begin{eqnarray*}
\phi_S (p) & = & P(Z_{(0,0)} \leq t) \\
\\
&= & P(Y_{(0,0)}, Y_{(1,0)}, Y_{(0, -1)}, Y_{(1,-1)} \leq t)
\end{eqnarray*}

Following \cite{ButlerMSc}, which introduced that handy notation in the $1$-dimensional case, we abbreviate the latter as
$$((0,0), (1,0), (0,-1), (1,-1))_Y$$
If say $((\ol{(0,0)}, (1,0), \ol{(0,-1)})_Y$ means
$$P(Y_{(1,0)} \leq t, Y_{(0,0)}, Y_{(0,-1)} > t),$$
then it follows from Lemma 7 and from translation invariance (e.g. $(\ol{(0,0)}, \ol{(1,0)}) = (\ol{(0,-1)}, \ol{(1,-1)})$ that
\begin{eqnarray*}
\phi_L (p) & =& ((0,0), (1,0), (0,-1), (1,-1))_Y\\
\\
& =& 1 - 4 \ol{(0,0)}_Y + 2(\ol{(0,0)}, \ol{(1,0)})_Y + 2 (\ol{(0,0)}, \ol{(0,-1)})_Y + (\ol{(1,0)}, \ol{(0,-1)})_Y \\
\\
& & + (\ol{(0,0)}, \ol{(1,-1)})_Y - (\ol{(0,0)}, \ol{(0,-1)}, \ol{(1,-1)} )_Y - (\ol{(0,0)}, \ol{(1,0)}, \ol{(1,-1)})_Y\\
\\
& & - (\ol{(0,0)}, \ol{(0, -1)}, \ol{(1,0)})_Y - (\ol{(1,0)}, \ol{(0,-1)}, \ol{(1,-1)})_Y + (\ol{(0,0)}, \ol{(1,0)}, \ol{(0,-1)}, \ol{(1,-1)})_Y
\end{eqnarray*}

According to the definition of $Y_{(i,j)}$ we e.g. have

\begin{eqnarray*}
(\ol{(0,0)}, \ol{(0,-1)})_Y  &= & ( \ \ol{(0,0)}, \ol{(-1,0)}, \ol{(0,1)}, \ol{(-1,1)}, \ \  \ol{(0,-1)}, \ol{(-1,-1)}, \ol{(0,0)}, \ol{(-1,0)} \ )_X\\
\\
& = & (\ol{(0,0)}, \ol{(-1,0)}, \ol{(0,1)}, \ol{(-1,1)}, \ol{(0,-1)}, \ol{(-1,-1)})_X
\end{eqnarray*}

Putting $q = 1 - p = P(X_{(0,0)} > t)$ the latter contributes a term $q^6$ to

\begin{eqnarray*}
\phi_L(p) & =& 1 - 4q^4 + 2q^6 + 2q^6 + q^7 + q^7 - q^8 - q^8 - q^8 - q^8 + q^9\\
\\
&= & 1 - 4q^4+ 4q^6 + 2q^7 - 4q^8  + q^9
\end{eqnarray*}

\section{Formulas for the distribution transfer of the major
$LULU$-operators}

As it was already done in the preceding Section  it is
often convenient to use the notation $q=1-p$. For example
the output distribution of $M_n$, $L_n$ and $U_n$
given in (\ref{transferMedian}), (\ref{TransLn}) and
(\ref{TransUn}) respectively can be written in the following
shorter form:
\begin{eqnarray}
\phi_{M_n}(p)&=&\sum\limits_{j=n+1}^{2n+1}{2n+1 \choose
j}p^jq^{2n+1-j},\\
\phi_{L_n}(p)&=&1-(n+1)q^{n+1}+nq^{n+2},\label{pTransLn}\\
\phi_{U_n}(p)&=&(n+1)p^{n+1}-np^{n+2}.\label{pTransUn}
\end{eqnarray}

Theorem 11 below deals with the output distribution of $L_nU_n$ and $U_nL_n$. They were first derived
in \cite{Conradie}, but the statement of the theorem was also
independently proved by Butler \cite{ButlerMSc}. In 5.1 we present
a proof using Butler's "expansion calculus". In 5.2 this method is applied to more complicated
situations.

\subsection{The output distribution of the $LU$-operators}

First, observe that instead of winding up with full blown inclusion-exclusion when switching {\it all} inequalities $> t$ to $\leq t$ (dual of Lemma 7), one can be economic and only switch {\it some} inequalities:

\begin{eqnarray}
(\ol{0}, \ol{1}, \cdots, \ol{n})_X & = & (\ol{0}, \cdots \ol{n-1})_X - (\ol{0}, \cdots, \ol{n-1}, n)_X\\
\nonumber \\
& =& (\ol{0}, \cdots, \ol{n-2})_X - (\ol{0}, \cdots, \ol{n-2}, n-1)_X - (\ol{0}, \cdots, \ol{n-1}, n)_X \nonumber \\
& \vdots & \nonumber \\
& = & (\ol{0})_X - (\ol{0}, 1)_X - (\ol{0}, \ol{1}, 2)_X - \cdots - (\ol{0}, \cdots, \ol{n-1}, n)_X \nonumber \\
\nonumber \\
& = & 1 - (0)_X - \ds\sum_{i=0}^{n-1} (\ol{0}, \cdots, \ol{i}, i+1)_X \nonumber
\end{eqnarray}
\begin{lemma}
{[1, Corollary 4]:} Let $X$ be a bi-infinite sequence of random variables. Then
$$(\ol{0}, \ol{1}, \cdots, \ol{n})_X \ = \ 1- [n+1] (0)_X + \ds\sum_{i=0}^{n-1} [n-i] (0, \ol{1}, \cdots,  \ol{i}, i+1)_X$$
\end{lemma}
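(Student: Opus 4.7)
The plan is to extend the telescoping ``switch'' identity derived just before the lemma, namely
$$(\ol{0}, \ol{1}, \cdots, \ol{n})_X \ = \ 1 - (0)_X - \ds\sum_{i=0}^{n-1}(\ol{0}, \cdots, \ol{i}, i+1)_X,$$
by expanding each summand on the right into terms of the form $(0, \ol{1}, \cdots, \ol{j}, j+1)_X$. Once everything is written in those terms, grouping will produce the stated coefficients; in particular the bracket symbols $[n+1]$ and $[n-i]$ will turn out to be just $n+1$ and $n-i$.

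For the expansion I would apply the elementary identity $P(A \cap B) = P(B) - P(\ol{A} \cap B)$ once more, but now to the \emph{leftmost} overlined variable of a typical summand, obtaining
$$(\ol{0}, \ol{1}, \cdots, \ol{i}, i+1)_X \ = \ (\ol{1}, \cdots, \ol{i}, i+1)_X - (0, \ol{1}, \cdots, \ol{i}, i+1)_X.$$
Translation invariance (which holds in the paper's standing i.i.d.\ setting) rewrites the first term on the right as $(\ol{0}, \cdots, \ol{i-1}, i)_X$, turning the display into a one-step recurrence in $i$ whose base case $(\ol{0}, 1)_X = (0)_X - (0, 1)_X$ is immediate. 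Telescoping then gives
$$(\ol{0}, \cdots, \ol{i}, i+1)_X \ = \ (0)_X - \ds\sum_{j=0}^{i}(0, \ol{1}, \cdots, \ol{j}, j+1)_X.$$

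Substituting this back into the original display, the $n$ copies of $-(0)_X$ produced combine with the standalone $-(0)_X$ to give $-(n+1)(0)_X$. Interchanging the resulting double sum $\ds\sum_{i=0}^{n-1}\ds\sum_{j=0}^{i}$ then shows that each $(0, \ol{1}, \cdots, \ol{j}, j+1)_X$ appears exactly $n-j$ times (since $i$ ranges over $\{j, j+1, \cdots, n-1\}$), which is the claimed formula.

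The only step requiring any attention beyond routine algebra is the translation-invariance substitution; everything else is clean bookkeeping. A parallel induction on $n$ would also work, its inductive step resting on $(\ol{0}, \cdots, \ol{n+1})_X = (\ol{0}, \cdots, \ol{n})_X - (\ol{0}, \cdots, \ol{n}, n+1)_X$ together with the same telescoped closed form for the second summand, but I prefer the direct route since it simply continues the derivation already displayed in the text.
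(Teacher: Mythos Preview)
Your proposal is correct and follows essentially the same route as the paper: both start from the preliminary identity (17), establish the closed form $(\ol{0},\ldots,\ol{k},k+1)_X=(0)_X-\sum_{j=0}^{k}(0,\ol{1},\ldots,\ol{j},j+1)_X$ via repeated splitting together with translation invariance, and then substitute into (17) and count multiplicities. The only cosmetic difference is that the paper obtains that closed form by fully expanding $(k+1)_X$ and isolating the last summand, whereas you set up the one-step recurrence $a_i=a_{i-1}-(0,\ol{1},\ldots,\ol{i},i+1)_X$ and telescope; these are the same computation organized slightly differently.
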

Note that for $i =0$ we get the summand $n(0,1)_X$.

{\it Proof:}
From
\begin{eqnarray*}
(k+1)_X &= & (k+1, k)_X + (k+1, \ol{k})_X \\
\\
&= & (k+1, k)_X + (k+1, \ol{k}, k-1)_X+ (k+1, \ol{k}, \ol{k-1})_X \ = \cdots \\
\\
& = & (k+1, k)_X + (k+1, \ol{k}, k-1)_X + \cdots + (k+1, \ol{k}, \cdots, \ol{1}, 0)_X + (k+1, \ol{k}, \cdots, \ol{0})_X
\end{eqnarray*}
follows, by translation invariance, that
\begin{eqnarray*}
(\ol{0}, \cdots, \ol{k}, k+1)_X &= & (k+1)_X - (k, k+1)_X - \ds\sum_{i=0}^{k-1} (k-i-1, \ol{k-i}, \cdots, \ol{k}, k+1)_X \\
\\
& =& (0)_X - (0,1)_X  - \ds\sum_{i=0}^{k-1} (0, \ol{1}, \cdots, \ol{i+1}, i+2)_X
\end{eqnarray*}
Using (17) one derives for (say) $n=4$ that
\begin{eqnarray*}
(\ol{0}, \ol{1}, \ol{2}, \ol{3}, \ol{4})_X & = & 1 - (0)_X - \ds\sum_{k=0}^3 (\ol{0}, \cdots, \ol{k}, k+1)_X\\
\\
&= & 1 - (0)_X - \ds\sum_{k=0}^3 \left[ (0)_X - (0,1)_X - \ds\sum_{i=0}^{k-1}(0, \ol{1}, \cdots, \ol{i+1}, i+2)_X \right]\\
\\
& = & 1- 5(0)_X + 4(0,1)_X\\ \\
& &  + (0, \ol{1}, 2)_X \\
\\
& & + (0, \ol{1}, 2)_X  + (0, \ol{1}, \ol{2}, 3)_X \\
\\
& & + (0, \ol{1}, 2)_X + (0, \ol{1}, \ol{2}, 3)_X + (0, \ol{1}, \ol{2}, \ol{3}, 4)_X\\
\\
& =& 1 - 5(0)_X + \ds\sum_{i=0}^3 [4-i](0, \ol{1}, \cdots, \ol{i}, i+1)_X
\end{eqnarray*}
\hfill $\square$

Unsurprisingly, for {\it dependently} distributed random variables $B_i$ certain combinations of $B_i$'s being $\leq t$ and simultaneously other $B_j$'s being $> t$, are impossible, i.e. have probability $0$. More specifically:
\begin{lemma}
{[1, Theorem 10]:} Let $A$ be a bi-infinite identically distributed sequence of random variables and let $B = \bigvee^r A$. Then

$$(0, \ol{1}, \cdots, \ol{n-1}, n)_B = \left\{ \begin{array}{lll} 0  & , & n \leq r+1\\
\\
(0, \cdots, r, \ol{r+1}, \ol{n-1}, n, \cdots, n +r)_A & , & r +1< n < 2r +4 \end{array} \right.$$
\end{lemma}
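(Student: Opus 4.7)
The plan is to prove equality of the events underlying the two sides of the claimed identity; equality of probabilities then follows. Throughout, I write $B_i = A_i \vee A_{i+1} \vee \cdots \vee A_{i+r}$, so that $B_i \leq t$ is equivalent to $A_i, A_{i+1}, \ldots, A_{i+r} \leq t$, while $B_i > t$ is equivalent to the existence of at least one index $j \in [i, i+r]$ with $A_j > t$. This translation is the only tool really used; everything else is elementary set-chasing.

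For the case $n \leq r+1$, the constraints $B_0 \leq t$ and $B_n \leq t$ force $A_j \leq t$ for every $j \in \{0,\ldots,r\} \cup \{n,\ldots,n+r\}$. When $n \leq r+1$, this union already contains $\{0,\ldots,n+r\}$, so for any $i \in \{1,\ldots,n-1\}$ the entire window $[i,i+r]$ is covered, making $B_i \leq t$ and contradicting $B_i > t$. Hence the event is empty and its probability is $0$.

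For the case $r+1 < n < 2r+4$, let $E$ and $E'$ denote the events on the two sides, and I would show $E = E'$. For $E \subseteq E'$, the outer constraints $B_0 \leq t$ and $B_n \leq t$ again pin down $A_0,\ldots,A_r \leq t$ and $A_n,\ldots,A_{n+r} \leq t$; to extract $A_{r+1} > t$ I invoke $B_1 > t$, whose window $[1,r+1]$ must contain some $A_j > t$, but $A_1,\ldots,A_r$ are already $\leq t$, so $A_{r+1} > t$. Symmetrically $A_{n-1} > t$ comes from $B_{n-1} > t$. For the converse $E' \subseteq E$, only the conditions $B_i > t$ for $1 \leq i \leq n-1$ need checking: I would split on whether $i \leq r+1$, in which case $A_{r+1}$ lies in $[i,i+r]$ and witnesses $B_i > t$, or $i \geq r+2$, in which case $A_{n-1}$ plays that role.

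The key step, and the place where the upper bound $n < 2r+4$ enters, is verifying in the sub-range $i \geq r+2$ that $A_{n-1}$ actually lies in the window $[i,i+r]$. This reduces to $n - 1 \leq i + r$, which combined with the extremal value $i = r+2$ gives $n \leq 2r+3$, equivalently $n < 2r+4$. At the boundary $n = 2r+4$ the index $i = r+2$ produces a window $[r+2,2r+2]$ that misses both $A_{r+1}$ and $A_{n-1} = A_{2r+3}$, so $B_i > t$ would require a third high value somewhere in $\{r+2,\ldots,2r+2\}$ and the simple two-point description $E'$ breaks down. This is the only real obstacle, and it is exactly what the hypothesis $n < 2r+4$ is designed to rule out.
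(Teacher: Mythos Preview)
Your argument is correct. It differs in style from what the paper does: the paper does not give a general proof but an ``ad hoc argument which conveys the spirit of the proof'' on the instance $n=5$, $r=1$, carried out entirely at the level of probabilities. There one applies inclusion--exclusion to peel off the inner constraints $\ol{1},\ol{3}$ from $(0,\ol{1},\ol{2},\ol{3},\ol{4},5)_B$, translates each resulting $B$-expression into $A$-expressions, observes that two of the four summands vanish because e.g.\ $A_4,A_5\le t$ is incompatible with $B_4>t$, and then telescopes the two survivors. Your route instead shows that the underlying \emph{events} on the two sides coincide as sets, by the window-covering analysis you describe; equality of probabilities is then automatic. This is shorter, makes the role of the bound $n<2r+4$ completely transparent (it is exactly what guarantees that every intermediate window $[i,i+r]$ contains at least one of the two forced ``high'' indices $r+1$ and $n-1$), and in fact yields a stronger, distribution-free conclusion. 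The paper's probability-manipulation approach has the compensating virtue of being in the same expansion-calculus idiom as the surrounding Lemmas~8 and~10, so it meshes more seamlessly with how the identity is actually consumed later in the proof of Theorem~11.
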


For instance, for $n =5, r =1$ we have $r+1< n < 2r+4$, and so
$$(0, \ol{1}, \ol{2}, \ol{3}, \ol{4}, 5)_B = (0, 1, \ol{2}, \ol{4}, 5, 6)_A$$
Let us give an ad hoc argument which conveys the spirit of the proof. In view of $B_i = A_i \vee A_{i+1}$ one e.g. has that $B_5 \leq t \Leftrightarrow A_5, A_6 \leq t$. Using inclusion-exclusion we get
\begin{eqnarray*}
(0, 5, \ol{2}, \ol{4}, \ol{1}, \ol{3})_B &= & (0, 5,  \ol{2}, \ol{4})_B - (0, 5, \ol{2}, \ol{4}, 1)_B - (0, 5, \ol{2}, \ol{4}, 3)_B + (0, 5, \ol{2}, \ol{4}, 1, 3)_B\\
\\
& = & P(A_0, A_1, A_5, A_6 \leq t, B_2, B_4> t ) - P(A_0, A_1, A_2, A_5, A_6 \leq t, B_2, B_4> t)\\
\\
& & -P(A_0, A_1, A_3, A_4, A_5, A_6 \leq t, B_2, B_4> t)
 + P(A_0, A_1, \cdots  A_6 \leq t, B_2, B_4 > t)
\end{eqnarray*}
Since $A_4, A_5 \leq t$ is incompatible with $B_4 = A_4 \vee A_5 > t$, the last two terms are $0$. Furthermore, given that $A_5 \leq t$, the statement
$B_4 > t$ amounts to $A_4 > t$. Ditto, given that $A_2 \leq t$, the statement $B_2> t$ amounts to $A_3 > t$. Hence

\begin{eqnarray*}
& & (0,5, \ol{2}, \ol{4}, \ol{1}, \ol{3})_B \\
\\
& =& P(A_0, A_1, A_5, A_6 \leq t, A_4 > t, B_2 > t) - P(A_0, A_1, A_2, A_5, A_6\leq t, A_4 > t, A_3 > t)\\
\\
&= & (  \ P(A_0, A_1, A_5, A_6 \leq t, A_4 > t) - P(A_0, A_1, A_5, A_6 \leq t, A_4 > t, A_2 \leq t, A_3 {\bf \leq} t) \ ) \\
\\
& & - P(A_0, A_1, A_5, A_6 \leq t, A_4 > t, A_2 \leq t, A_3 {\bf >} t)\\
\\
&= & P(A_0, A_1, A_5, A_6 \leq t, A_4 > t) - P(A_0, A_1, A_5, A_6 \leq t, A_4> t, A_2 \leq t)\\
\\
&= & (0, 1, \ol{2}, \ol{4}, 5, 6)_A
\end{eqnarray*}
Dualizing Lemma 9 yields:
\begin{lemma}
{[1, Corollary 11]:} Let $B = \bigwedge^r A$. Then
$$ (\ol{0},1 \cdots, n-1, \ol{n})_B = \left\{ \begin{array}{lll}  0 & , & n\leq r+1 \\
\\
(\ol{0}, \cdots, \ol{r}, r+1, n-1, \ol{n}, \cdots, \ol{n+r})_A & , & r +1< n < 2r +4 \end{array} \right.$$
\end{lemma}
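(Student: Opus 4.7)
The plan is to derive Lemma 11 as a formal dualization of Lemma 9. Introduce the reflected sequence $\widetilde A_i := -A_i$, $\widetilde B_i := -B_i$, and the reflected threshold $\tilde t := -t$. Since $-\min(u,v) = \max(-u,-v)$, the hypothesis $B = \bigwedge^r A$ translates to $\widetilde B = \bigvee^r \widetilde A$, placing us squarely in the setting of Lemma 9. Under the same sign change, the event $\{B_i > t\}$ becomes $\{\widetilde B_i < \tilde t\}$ and $\{B_i \leq t\}$ becomes $\{\widetilde B_i \geq \tilde t\}$; up to probability-$0$ boundary events (which cause no trouble under the standing hypotheses on $F_X$, and can in any case be removed by a limiting argument), these can be replaced by $\{\widetilde B_i \leq \tilde t\}$ and $\{\widetilde B_i > \tilde t\}$ respectively. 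Thus every barred entry in the $B$-notation corresponds to an unbarred entry in the $\widetilde B$-notation, and vice versa.

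With this dictionary, the left-hand side becomes
$$(\ol{0}, 1, \ldots, n-1, \ol{n})_B \ = \ (0, \ol{1}, \ldots, \ol{n-1}, n)_{\widetilde B}.$$
Now apply Lemma 9 to the pair $(\widetilde A, \widetilde B)$: if $n \leq r+1$, the expression is $0$, and if $r+1 < n < 2r+4$ it equals
$$(0, \ldots, r, \ol{r+1}, \ol{n-1}, n, \ldots, n+r)_{\widetilde A}.$$
Translating back via the dictionary swaps every barred with every unbarred index, delivering exactly
$$(\ol{0}, \ldots, \ol{r}, r+1, n-1, \ol{n}, \ldots, \ol{n+r})_A,$$
as claimed. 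The vanishing case $n \leq r+1$ is preserved automatically by the translation.

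I expect the only subtlety to be the bookkeeping of strict versus non-strict inequalities under the involution $x \mapsto -x$; this is cosmetic when the common distribution of the $A_i$ is continuous, but for a fully distribution-free statement one should either invoke a standard approximation argument, or simply replay the proof of Lemma 9 verbatim with $\wedge/\vee$ and $\leq/>$ swapped throughout. The latter option goes through line-by-line, since Lemma 9's proof uses only two facts whose duals are immediate: that $A_r \wedge A_{r+1} \leq t$ is incompatible with $A_r, A_{r+1} > t$ (dual: $A_r \vee A_{r+1} > t$ is incompatible with $A_r, A_{r+1} \leq t$), and that, conditional on one coordinate's being $\leq t$, an erosion's being $> t$ collapses to a shorter condition on the remaining coordinates (dual: conditional on one coordinate's being $> t$, a dilation's being $\leq t$ collapses analogously). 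Hence no genuinely new ideas are required beyond the duality already in place.
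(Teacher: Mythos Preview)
Your approach is correct and is exactly what the paper does: the paper's entire proof is the single line ``Dualizing Lemma 9 yields:'' before stating the lemma. One small caveat worth noting is that with the paper's conventions $(\bigwedge^r A)_i = A_{i-r}\wedge\cdots\wedge A_i$ but $(\bigvee^r A)_i = A_i\vee\cdots\vee A_{i+r}$, so negation gives $\widetilde B$ equal to a \emph{shift} of $\bigvee^r\widetilde A$ rather than $\bigvee^r\widetilde A$ itself; this is harmless by translation invariance (and your alternative of replaying Lemma~9's argument with $\wedge/\vee$ and $\leq/>$ swapped avoids it altogether).
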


\begin{theorem}\label{theoLuUndistrtransfer}
The distribution transfer functions of $L_nU_n$ and $U_nL_n$ are:
\begin{eqnarray}
&&\phi_{L_nU_n}(p)= p^{n+1} +np^{n+1} q +p^{2n+2}q + \frac{1}{2}
(n-1)(n+2) p^{2n+2}q^2,\\
&&\phi_{U_nL_n}(p)=1-\phi_{L_nU_n}(q) =  1 - q^{n+1}-npq^{n+1} -
pq^{2n+2} - \frac{1}{2}(n-1)(n+2)p^2q^{2n+2}.\hspace{1cm}
\end{eqnarray}
\end{theorem}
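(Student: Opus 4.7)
The plan is to compute $\phi_{L_nU_n}$ via the expansion calculus of Section 5, and then to obtain $\phi_{U_nL_n}$ by duality. First observe the operator identity $L_nU_n = \bigvee^n \bigwedge^{2n} \bigvee^n$, which follows from $L_nU_n = (\bigvee^n \bigwedge^n)(\bigwedge^n \bigvee^n)$ together with the easy pointwise fact $\bigwedge^n \circ \bigwedge^n = \bigwedge^{2n}$. Introduce intermediate sequences $A := \bigvee^n X$ and $V := \bigwedge^{2n} A$, so that $(L_nU_nX)_0 = V_0 \vee V_1 \vee \cdots \vee V_n$ and, in the shorthand notation of Section 5,
$$\phi_{L_nU_n}(p) \ = \ (0, 1, \ldots, n)_V, \qquad p = F_X(t).$$

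Next apply the dual form of Lemma 8 (swap every barred and unbarred index; the telescoping proof of Lemma 8 goes through symmetrically) to expand
$$(0, 1, \ldots, n)_V \ = \ 1 - (n+1)(\ol{0})_V + \sum_{i=0}^{n-1}[n-i]\,(\ol{0}, 1, \ldots, i, \ol{i+1})_V.$$
Apply Lemma 10 with $r = 2n$ to each summand with $i \geq 1$: the corresponding Lemma 10 parameter is $i+1$, which satisfies $2 \leq i+1 \leq n < r+1 = 2n+1$, so the summand vanishes. Only the $i = 0$ term $(\ol{0}, \ol{1})_V$ (for which Lemma 10 has empty middle and does not apply) survives, alongside $(\ol{0})_V$. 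Since $V_j = A_{j-2n} \wedge \cdots \wedge A_j$, translation invariance yields
$$(\ol{0})_V \ = \ (\ol{0}, \ldots, \ol{2n})_A, \qquad (\ol{0}, \ol{1})_V \ = \ (\ol{0}, \ldots, \ol{2n+1})_A.$$

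Now apply Lemma 8 itself to each $(\ol{0}, \ldots, \ol{m})_A$ for $m \in \{2n, 2n+1\}$. The resulting summands split into $(0)_A = p^{n+1}$, the $i = 0$ piece $(0, 1)_A = p^{n+2}$, and the family $(0, \ol{1}, \ldots, \ol{i}, i+1)_A$ with $i \geq 1$, to which we apply Lemma 9 with $r = n$. The Lemma 9 vanishing condition $i+1 \leq n+1$ eliminates $i = 1, \ldots, n$; for $i \in [n+1, m-1]$ the Lemma 9 image is an explicit $X$-probability with $2n+2$ indices pinned to $\leq t$ and either one or two indices pinned to $> t$, evaluating for i.i.d.\ $X$ to $p^{2n+2}q$ or $p^{2n+2}q^2$ respectively. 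Collecting all contributions with the $[m-i]$ multiplicities and substituting into
$$\phi_{L_nU_n}(p) \ = \ 1 - (n+1)(\ol{0}, \ldots, \ol{2n})_A + n\,(\ol{0}, \ldots, \ol{2n+1})_A$$
yields
$$\phi_{L_nU_n}(p) \ = \ (n+1)p^{n+1} - np^{n+2} + p^{2n+2}q + \frac{1}{2}(n-1)(n+2)\,p^{2n+2}q^2,$$
which rewrites via $(n+1)p^{n+1} - np^{n+2} = p^{n+1}(1 + nq)$ as the claimed first formula. The companion identity $\phi_{U_nL_n}(p) = 1 - \phi_{L_nU_n}(q)$ is immediate from the order-reversing duality $x \mapsto -x$, which interchanges $L_n \leftrightarrow U_n$ (up to a horizontal shift that does not affect distributions) and sends $F_X(t)$ to $1 - F_X(-t)$.

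The main bookkeeping subtlety is the boundary case $i = n+1$ in the Lemma 9 step: the two ``forced'' barred indices $\ol{r+1}$ and $\ol{i}$ coincide at position $n+1$, producing a factor $q^1$ rather than $q^2$. All larger $i \in [n+2, m-1]$ contribute $q^2$, and after the substitution $k = m - i$ their weighted contributions organize into triangular-number sums $\sum_{k=1}^{n-2} k$ (for $m = 2n$) and $\sum_{k=1}^{n-1} k$ (for $m = 2n+1$), whose weighted combination $-(n+1)\frac{(n-2)(n-1)}{2} + n\frac{(n-1)n}{2}$ collapses cleanly to $(n-1)(n+2)/2$. A naive treatment that lumps the $i = n+1$ contribution in with the $i \geq n+2$ ones gives an incorrect answer with spurious higher powers of $q$; the separation at the boundary is precisely what produces the clean quadratic form in the theorem.
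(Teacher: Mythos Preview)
Your proof is correct and follows essentially the same route as the paper: the decomposition $L_nU_n=\bigvee^n\bigwedge^{2n}\bigvee^n$, the dual of Lemma~8 at the $V$-level, the vanishing via Lemma~10 ($r=2n$) of all but the $i=0$ summand, the second application of Lemma~8 at the $A$-level, and the evaluation via Lemma~9 ($r=n$) with the boundary case $i=n+1$ singled out. The only cosmetic difference is that the paper first combines the two Lemma~8 expansions into the single sum $\sum_{i=0}^{2n}[i-n](0,\ol{1},\ldots,\ol{i},i+1)_A$ before invoking Lemma~9, whereas you keep the $m=2n$ and $m=2n+1$ pieces separate until the end; the arithmetic is the same either way.
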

{\it Proof:} Since $L_n U_n = (\bigvee^n \bigwedge^n )(\bigwedge^n \bigvee^n) = \bigvee^n \bigwedge^{2n} \bigvee^n$ we put
$$A= \bigvee^n X, \quad  B = \bigwedge^{2n} A, \quad C = \bigvee^n B$$
 and calculate
\begin{eqnarray*}
\phi_{L_nU_n}(p) &= & P(C_0 \leq t) \  = \ (0)_C \ = \ (0, \cdots, n)_B\\
\\
&= & 1 - [n+1] (\ol{0})_B + \ds\sum_{i=0}^{n-1} [n-i] (\ol{0}, 1, \cdots, i, \ol{i+1} )_B \qquad (\mbox{dual of Lemma 8})\\
\\
&= & 1 -[n+1] (\ol{0}, \cdots, \ol{2n})_A +n (\ol{0}, \cdots, \ol{2n+1})_A + \ds\sum_{i=1}^{n-1} 0 \qquad (\mbox{Lemma 10}, \ r = 2n)\\
\\
& = & 1-[n+1] \left[ 1- [2n+1] (0)_A + \ds\sum_{i=0}^{2n-1} [2n-i] (0, \ol{1}, \cdots, \ol{i}, i+1)_A \right] \\
\\
& & +n \left[ 1 - [2n+2] (0)_A + \ds\sum_{i=0}^{2n} [2n+1-i] (0, \ol{1}, \cdots, \ol{i}, i+1)_A \right] \qquad (\mbox{Lemma 8})\\
\\
& = & [n+1] (0)_A + \ds\sum_{i=0}^{2n} [i-n] (0, \ol{1}, \cdots, \ol{i}, i+1)_A\\
\\
& =& [n+1] (0)_A - n (0,1)_A + \ds\sum_{i=1}^n [i-n] (0, \ol{1}, \cdots, \ol{i}, i+1)_A \\
\\
& & + (0, \ol{1}, \cdots, \ol{n+1}, n+2)_A + \ds\sum_{i=n+2}^{2n} [i-n] (0, \ol{1}, \cdots, \ol{i}, i+1)_A \\
&= & [n+1](0, \cdots, n)_X - n(0, \cdots, n+1)_X +0 + (0, \cdots, n, \ol{n+1}, n+2, \cdots, 2n+2)_X\\
\\
& &  + \ds\sum_{i=n+2}^{2n} [i-n](0, \cdots, n, \ol{n+1}, \ol{i}, i+1, \cdots, i+1+n)_X \qquad (\mbox{Lemma 9})\\
&= & (n+1) p^{n+1} - np^{n+2} + p^{2n+2} q+ \ds\sum_{i=n+2}^{2n} (i-n) p^{2n+2} q^2\\
\\
&= & p^{n+1} + np^{n+1} - np^{n+1} p+p^{2n+2} q+ (2+3+\cdots +n)p^{2n+2}q^2\\
\\
& = & p^{n+1} +np^{n+1} q+p^{2n+2} q+ \frac{1}{2} (n-1) (n+2)p^{2n+2} q^2
\end{eqnarray*}
\hfill $\square$

From (18) it is clear that $p^{n+1}$ is the highest power of $p$ dividing $\phi_{L_nU_n}$. An easy calculation confirms that, as a polynomial in $p$, the right hand side of (19) is $(2n+3)p^2+(\cdots) p^3 + \cdots$. From Corollary 4 hence follows that $L_nU_n$ is lower robust of order $n+1$, but upper robust only of order 2.

\subsection{The output distributions of the $LULU$-operators $C_n$ and $F_n$}

We consider next the specific, mutually dual LULU-operators 
\begin{eqnarray*}
C_n &=& L_nU_nL_{n-1}U_{n-1} \cdots L_1U_1,\\ \\
\mathcal{F}_n &=& U_nL_nU_{n-1}L_{n-1} \cdots U_1L_1.
\end{eqnarray*}
In view of
\begin{eqnarray*}
C_{n-1} &= & {\bigvee}^{n-1} {\bigwedge}^{2n-2} {\bigvee}^{n-1} C_{n-2}\\
\\
C_n &= & {\bigvee}^n {\bigwedge}^{2n} {\bigvee}^n C_{n-1} \\
\\
& = & {\bigvee}^n {\bigwedge}^{2n} {\bigvee}^{2n-1} {\bigwedge}^{2n-2} {\bigvee}^{n-1} C_{n-2}
\end{eqnarray*}
we define the following doubly infinite sequences of identically distributed random variables. Starting with a sequence $X$ of i.i.d. random variables, put
\begin{eqnarray*}
A & : = & {\bigvee}^{n-1} C_{n-2} \  X\\
\\
B & : = & {\bigwedge}^{2n-2} A\\
\\
C' & : = & {\bigvee}^{n-1} B\\
\\
C & : = & {\bigvee}^{2n-1} B \\
\\
D &: = & {\bigwedge}^{2n} C\\
\\
E & :  = & {\bigvee}^n D
\end{eqnarray*}
\begin{theorem}
{[1, Theorem 14]} With $A, B$ as defined above the output distribution $\phi_{C_n}$ of $C_n$ can be computed recursively as follows:
$$\phi_{C_n} \ = \ \phi_{C_{n-1}} + n (G_{2n} - G_{2n-1}),$$
where
\begin{eqnarray*}
G_{2n} & : = & (0, \cdots, 2n-1, \ol{2n}, 2n+1, \cdots, 4n)_B\\
\\
G_{2n-1} & : = & (\ol{0}, \cdots, \ol{2n-2}, 2n-1, \ol{2n}, \cdots, \ol{4n-2} )_A
\end{eqnarray*}
\end{theorem}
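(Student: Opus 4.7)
The plan is to mirror the calculation in the proof of Theorem~\ref{theoLuUndistrtransfer}, treating the outermost block $\bigvee^n \bigwedge^{2n}$ of the decomposition $C_n = \bigvee^n \bigwedge^{2n} \bigvee^{2n-1} \bigwedge^{2n-2} \bigvee^{n-1} C_{n-2}$ exactly as that proof treats $L_n U_n$, and then to exploit the inner block $\bigvee^{2n-1} \bigwedge^{2n-2}$ in order to recover $\phi_{C_{n-1}}$ up to the correction $n(G_{2n} - G_{2n-1})$.

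First I would start from $\phi_{C_n}(p) = (0)_E = (0,1,\ldots,n)_D$ and run the same chain used in Theorem~\ref{theoLuUndistrtransfer}: apply the dual of Lemma~8, then Lemma~10 with $r = 2n$ to kill every interior summand, and then Lemma~8 again to flip the remaining overlines. The outcome is
\[
\phi_{C_n}(p) \;=\; [n+1](0)_C \;+\; \sum_{i=0}^{2n}[i-n]\,(0, \overline{1}, \ldots, \overline{i}, i+1)_C,
\]
which is structurally identical to the intermediate formula in Theorem~\ref{theoLuUndistrtransfer}, only with $A$ replaced by $C$.

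Next I would apply Lemma~9 to each summand, this time using $C = \bigvee^{2n-1} B$ (so $r = 2n-1$). Every term with $1 \le i \le 2n-1$ vanishes; the $i=0$ term collapses to $(0,1,\ldots,2n)_B$ because $C_0, C_1 \le t$ iff $B_0, \ldots, B_{2n} \le t$; and the $i=2n$ term becomes exactly $G_{2n}$. Setting $P_k := (0,1,\ldots,k-1)_B$ and noting $(0)_C = P_{2n}$, this gives
\[
\phi_{C_n}(p) \;=\; (n+1)P_{2n} \;-\; nP_{2n+1} \;+\; nG_{2n}.
\]
Since $C' = \bigvee^{n-1} B$ equals $C_{n-1}X$, one also has $\phi_{C_{n-1}}(p) = (0)_{C'} = P_n$, so the theorem reduces to verifying the probabilistic identity
\[
(n+1)P_{2n} - nP_{2n+1} \;=\; P_n - nG_{2n-1}.
\]

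The main obstacle is this reduced identity, which turns on the inner structure $B = \bigwedge^{2n-2} A$. The crucial input is Lemma~10 with $r = 2n-2$: for every $2 \le k \le 2n-1$ it yields $(\overline{0}, 1, \ldots, k-1, \overline{k})_B = 0$. Splitting $(0,1,\ldots,k-1,\overline{k})_B = (1,\ldots,k-1,\overline{k})_B - (\overline{0},1,\ldots,k-1,\overline{k})_B$ and invoking translation invariance then collapses this to $P_k - P_{k+1} = P_{k-1} - P_k$, so $P_1, P_2, \ldots, P_{2n}$ form an arithmetic progression; a short linear calculation then produces $P_n = nP_{2n-1} - (n-1)P_{2n}$. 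Separately, Lemma~10 with $m = 2n$ identifies $G_{2n-1} = (\overline{0}, 1, \ldots, 2n-1, \overline{2n})_B$, and the same splitting trick gives $G_{2n-1} = P_{2n-1} - 2P_{2n} + P_{2n+1}$. Substituting both of these relations into the reduced identity makes the two sides algebraically equal, and the recursion is closed.
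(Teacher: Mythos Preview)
Your argument is correct. The first half (down to $\phi_{C_n} = (n+1)P_{2n} - nP_{2n+1} + nG_{2n}$, with $P_k = (0,\ldots,k-1)_B$) is literally the paper's derivation of its equation~(21). The second half, however, diverges from the paper in a pleasant way.

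The paper proceeds mechanically: it re-expands $(0,\ldots,2n-1)_B$ and $(0,\ldots,2n)_B$ via the dual of Lemma~8, combines the two sums into $1 - n(\overline{0})_B + \sum_{i=0}^{2n-1}[n-1-i](\overline{0},1,\ldots,i,\overline{i+1})_B$, then pushes everything down to the $A$-level with Lemma~10 ($r=2n-2$), and finally recognises the $A$-level expression $1 - n(\overline{0},\ldots,\overline{2n-2})_A + [n-1](\overline{0},\ldots,\overline{2n-1})_A$ as $\phi_{C_{n-1}}$ from its separately derived equation~(20). You instead stay entirely at the $B$-level and extract the algebraic content of Lemma~10 directly: the vanishing of $(\overline{0},1,\ldots,k-1,\overline{k})_B$ for $2\le k\le 2n-1$ says the second differences of $k\mapsto P_k$ are zero on that range, so $P_1,\ldots,P_{2n}$ is an arithmetic progression; and the single nonvanishing case $k=2n$ identifies $G_{2n-1}$ with the second difference $P_{2n-1}-2P_{2n}+P_{2n+1}$. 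The reduced identity is then a two-line linear substitution. This is arguably cleaner than the paper's route, since it avoids a second pass through Lemma~8 and never needs to write down $\phi_{C_{n-1}}$ at the $A$-level; on the other hand, the paper's route is more uniform (the same three lemmas applied in turn) and yields the $A$-level formula~(20) for $\phi_{C_{n-1}}$ along the way, which is independently useful for the next step of the recursion.
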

{\it Proof:} First, one calculates
\begin{eqnarray}
\phi_{C_{n-1}} & = & (0)_{C'} \ = \ (0, \cdots, n- 1)_B \nonumber \\
\nonumber  \\
&= & 1 - n(\ol{0})_B + \ds\sum_{i=0}^{n-2} [n-1-i] (\ol{0}, 1, \cdots, i, \ol{i+1})_B \quad (\mbox{dual of Lemma} \ 8) \nonumber \\
\nonumber \\
&= & 1- n(\ol{0}, \cdots, \ol{2n-2})_A + [n-1] (\ol{0}, \cdots, \ol{2n-1})_A + \ds\sum_{i=1}^{n-2} 0 \quad (\mbox{Lemma} \ 10, r = 2n-2) \nonumber \\
\nonumber \\
&= & 1 - n (\ol{0}, \cdots, \ol{2n-2})_A + [n-1] (\ol{0}, \cdots, \ol{2n-1})_A
\end{eqnarray}
The expansion of $\phi_{C_n}$ is driven a bit further:
\begin{eqnarray}
\phi_{C_n} & = & (0)_E = (0, \cdots, n)_D \nonumber \\
\nonumber \\
&= & 1 - [n+1] (0)_D + \ds\sum_{i=0}^{n-1} [n-i] (\ol{0}, 1, \cdots, i, \ol{i+1})_D \quad (\mbox{dual of Lemma} \ 8) \nonumber \\
\nonumber \\
&= & 1 - [n+1] (0, \cdots, 2n)_C +n (\ol{0}, \cdots, \ol{2n+1})_C + \ds\sum_{i=1}^{n-1} 0 \quad (\mbox{Lemma} \ 10,r = 2n)\nonumber \\
\nonumber \\
& = & 1 - [n+1] \left[ 1 - [2n+1](0)_C + \ds\sum_{i=0}^{2n-1}[2n-i] (0, \ol{1}, \cdots, \ol{i}, i+1)_C \right] \nonumber \\
\nonumber \\
& & + n \left[ 1 - [2n+2] (0)_C + \ds\sum_{i=0}^{2n} [2n+1 -i] (0, \ol{1}, \cdots, \ol{i}, i+1 )_C \right] \quad (\mbox{Lemma} \ 8) \nonumber \\
\nonumber \\
& = & [n+1] (0)_C - \ds\sum_{i=0}^{2n} [n-i] (0, \ol{1}, \cdots, \ol{i}, i+1)_C \quad (\mbox{easy arithmetic}) \nonumber \\
\nonumber \\
&= & [n+1] (0, \cdots, 2n-1)_B - n (0, \cdots, 2n)_B - \ds\sum_{i=1}^{2n-1} 0 \nonumber \\
\nonumber \\
& & + n (0, \cdots, 2n-1, \ol{2n}, 2n+1, \cdots, 4n)_B \quad (\mbox{Lemma} \ 9, r = 2n-1) \nonumber \\
\nonumber \\
&= & [n+1] (0, \cdots, 2n-1)_B - n(0, \cdots, 2n)_B + n G_{2n}
\end{eqnarray}
This yields
\begin{eqnarray*}
\phi_{C_n} - nG_{2n} & =& [n+1](0, \cdots, 2n-1)_B - n (0, \cdots, 2n)_B \quad (\mbox{by} \ (21)) \\
\\
&= & [n+1] \left[ 1- 2n(\ol{0})_B + \ds\sum_{i=0}^{2n-2} [2n-1-i](\ol{0}, 1, \cdots, i, \ol{i+1})_B \right] \\
\\
& & -n \left[ 1-[2n+1] (\ol{0})_B + \ds\sum_{i=0}^{2n-1} [2n-i]  (\ol{0}, 1, \cdots, i, \ol{i+1})_B \right] \quad (\mbox{dual of Lemma}  \ 8)\\
\\
& = & 1 - n(\ol{0})_B + \ds\sum_{i=0}^{2n-1} [n-1-i](\ol{0}, 1, \cdots, i, \ol{i+1})_B \quad (\mbox{easy arithmetic}) \\
\\
&= &  1- n(\ol{0}, \cdots, \ol{2n-2})_A + [n-1] (\ol{0}, \cdots, \ol{2n-1})_A + \ds\sum_{i=1}^{2n-2} 0\\
\\
& & -n (\ol{0}, \cdots, \ol{2n-2}, 2n-1, \ol{2n}, \cdots, \ol{4n-2})_A \quad (\mbox{Lemma} \ 10, r = 2n-2)\\
\\
&= & \phi_{C_{n-1}} - nG_{2n-1} \qquad (\mbox{by} \ (20))
\end{eqnarray*}
which, upon adding $nG_{2n}$ on both sides, gives the claimed formula for $\phi_{C_{n}}$. \hfill $\square$

\vskip 1cm

As an example, let us compute the output distribution
of $C_2$. From $A = \bigvee^1C_0X = \bigvee^1X$
follows $B = \bigwedge^2A = \bigwedge^2\bigvee X$. Using
expansion calculus the reader may verify that
$$G_{2n} = G_4 = (0,1,2,3,\ol{4}, 5, 6, 7, 8)_B = p^4 q^2[p+p^2q]^2$$
Similarly one gets
$$G_{2n-1} =G_3 = p^2q^2(1-p^2)^2.$$
Therefore
$$\begin{array}{lll}
\phi_{C_2} & =& \phi_{C_1} +2(G_4-G_3) \\
\\
& =& 3p^3+3p^4-9p^5+4p^6 +4p^7-10p^8 + 4p^9 + 8p^{10}-8p^{11} +2p^{12}
\end{array}$$

As to robustness, from the above representation of $\phi_{C_2}$ and by using
Corollary \ref{corRobust} we obtain that $C_2$ is lower robust of
order 3 like $U_2$. Similar to  $L_2U_2$ discussed in 5.1,
the upper robustness of $C_2$ is not inherited from $L_2$.
Indeed, we have
\[
\phi_{C_2}-1=q^2(2p^{10}-4p^9-2p^8+4p^7+4p^4-p^3-3p^2-2p-1)
\]
which implies that  $C_2$
is upper robust only of order 2. However, upper robustness is not constantly 2; these results were obtained from Theorem 12:

\[
\begin{tabular}{|c|c|c|c|c|c|c|}\hline
&&&&&&\\$n$&\ \ 1\ \ &\ \ 2\ \ &\ \ 3\ \ &\ \ 4\ \ &\ \ 5\ \ &\ \ 6\ \ \\&&&&&&\\\hline&&&&&&\\
lower robustness&2&3&4&4&5&6\\&&&&&&\\\hline&&&&&&\\
upper robustness&2&2&3&3&4&4\\&&&&&&\\\hline
\end{tabular}
 \]

We mention that some nice {\it closed formula} for $G_{2n}$ and $G_{2n-1}$ is verified for small $n$ and conjectured to hold universally in [1, Section 4.5.6].

\section{Using exclusion instead of inclusion-exclusion}

As witnessed by 5.2,  one can sometimes exploit symmetry to tame the inherent exponential complexity of inclusion-exclusion. However, without the possibility to clump together many identical terms, the number of summands in Lemma 7  is $2^n$, which is infeasible already for $n = 20$ or so.

In [14] on the other hand, some multi-purpose {\it principle of exclusion} (POE) is employed which had been useful in other situations before. When POE is aimed at calculating the output distribution of a stack filter $S$, a prerequisite is that the stack filter\footnote{More precisely, the positive Boolean function that underlies the stack filters must be given in DNF.} $S$ be given as a disjunction of conjunctions $K_i$, i.e. in {\it disjunctive normal form} (DNF). The POE then begins with the calculation of the set $\mbox{Mod}_1$ of all bitstrings that satisfy $K_1$, then from $\mbox{Mod}_1$ it {\it excludes} all bitstrings that violate $K_2$. This yields $\mbox{Mod}_2 \subseteq \mbox{Mod}_1$, from which all bitstrings are excluded that violate $K_3$, and so on. The feasibility of the POE hinges on the compact representation (using wildcards) of the sets $\mbox{Mod}_i$.

The details being given in [14], here we address the question of how one gets the DNF in the first place. Specifically we consider the frequent case that our stack filter $S$ is a EDC (Section 4) whose structural elements are provided. Let us go in medias res by reworking $S = \bigvee^1 \bigwedge^2 \bigvee^2$ of Example 1:

$$\begin{array}{llll}
(SX)_0 &= & A_0 = Z_0 \vee Z_1 & {\rm (DNF)}\\
\\
&= & (Y_{-2} \wedge Y_{-1} \wedge Y_0) \vee (Y_{-1} \wedge Y_0 \wedge Y_1) & {\rm (blow up)} \\
\\
&  = & Y_0 \wedge Y_{-1} \wedge (Y_{-2} \vee Y_1) & {\rm (get \ CNF)} \\
\\
& = & (X_0 \vee X_1 \vee X_2 \vee X_3) \wedge (X_{-1} \vee X_0 \vee X_1 \vee X_2) & {\rm (blow up)}  \\
\\
 & & \wedge \ \left( \left( X_{-2} \vee X_{-1} \vee X_0 \vee X_1\right) \vee \left( X_1 \vee X_2 \vee X_3 \vee X_4 \right) \right) \\
 \\
 & =& (X_0 \vee X_1 \vee X_2 \vee X_3) \wedge (X_{-1} \vee X_0  \vee X_1 \vee X_2) & {\rm (condense)}\\
 \\
 & & \wedge \ (X_{-2} \vee X_{-1} \vee X_0 \vee X_1 \vee X_2 \vee X_3 \vee X_4)\\
 \\
 &= & (X_0 \vee X_1 \vee X_2 \vee X_3) \wedge (X_{-1} \vee X_0 \vee X_1 \vee X_2) & {\rm (condense \ further)}\\
 \\
 &= & X_0 \vee X_1 \vee X_2 \vee (X_3 \wedge X_{-1}) & {\rm (get \ DNF)}
 \end{array}$$
The last line is the sought DNF of $S$. It is obtained by starting with the DNF $Z_0 \vee Z_1$. This gets ``blown up'' to a DNF in terms of $Y_i$'s (using definition (13) of $Z_0$ and $Z_1$). This DNF needs to be switched\footnote{How one switches between DNF and CNF of a positive Boolean function is a well researched topic which we won't persue here.} to CNF ($=$ conjunctive normal form). This in turn is blown up to a CNF in terms of $X_i$'s. Usually the result can and must be condensed in obvious ways (``condense further'' meant that only the inclusion-minimal index sets carry over). Continuing like this one takes turns switching DNF's with CNF's, and blowing up expressions. This is done as often as there are structural elements. As a ``side product'' the so called {\it rank selection probabilities} $RSP[i]$ are calculated. The latter is defined as the probability that the filter selects the $i$-th smallest pixel in the $w$-element sliding window. For instance here $w=5$ and $RSP[1] = RSP[2]=RSP[3] =0, \ RSP[4] =0.4, \ RSP[5]=0.6$.

The fourth author has written a Mathematica 9.0 program\footnote{It is available upon sending an email to mwild@sun.ac.za} which, given the structural elements of any EDC's (also 2-dimensional), first calculates the DNF of $S$ and from it the output distribution $\phi_S(p)$. (Alternatively the DNF of any stack filter, whether EDC or not, can be fed in directly.) Albeit Wild's algorithm is multi-purpose, it managed to calculate $\phi_{C_n}(p)$ up to $n=5$, and the result agreed with Butler's. Written out as EDC we have $C_5 = \bigvee^5\bigwedge^{10} \bigvee^9 \cdots \bigwedge^2\bigvee$ and $C_5$ has a sliding window of length $61$. The corresponding structural elements $\{0,1,2,3,4,5\}$, $\{0,-1, \cdots, -10\}$, $\{0,1, \cdots, 9\}$ and so forth triggered the calculation of a DNF comprising a  plentiful $12018$ conjunctions (time: $168224$ sec). From this $\phi_{C_5}(p)$ was calculated in $45 069$ sec. Here it is:

$12p^5+7x^6-23p^7+19p^8-130p^9+194p^{10} - 59p^{11} - 142p^{12} +460p^{13} - 787p^{14} + 715p^{15} - 7p^{16} - 1030p^{17}$

$+1959p^{18} - 2216p^{19}  + 208p^{20} + 3711p^{21} - 6748p^{22} +8412p^{23} - 7587p^{24} + 2023p^{25} + 4680p^{26}$

$-7903p^{27} + 8839p^{28} - 13540p^{29} + 30009p^{30}-51715p^{31} + 50159p^{32} - 7686p^{33} - 51417p^{34}$

$+ 78198p^{35} - 50589p^{36} + 6900p^{37} - 7680p^{38} + 56330p^{39} - 86905p^{40} + 43710p^{41} + 49540p^{42}$

$- 114680p^{43} + 103390p^{44} - 40555p^{45}-15370p^{46} +33955p^{47} - 25460p^{48} + 11790p^{49} - 3645p^{50}$

$+ 740p^{51} - 90p^{52} + 5p^{53}$

As to the rank selection probabilities of $C_5$ one has

$RSP[1] = \cdots = RSP[4] =0, \ RSP[5]=0.000002, \ RSP[6] = 0.00001,  \cdots, RSP[37] = 0.04701$,

$RSP[38] = 0.04703, \ RSP[39] = 0.04643, \cdots, RSP[58]=0.00012, \ RSP[59] = RSP[60]$

$= RSP[61]= 0$, the maximum being $RSP[38]$.

\section{Conclusion}

As mentioned in the introduction, concepts of robust smoothers related to ours have been previously considered. To quote from the abstract of [10]:

{\it In this paper we focus on rank selection probabilities (RSPs) as measures of robustness as it is well known that other statistical characterization of stack filters, such as output distributions, breakdown probabilities and output distributional influence functions can be represented in terms of RSPs.}

While we agree with this praise of RSPs we don't share the opinion on page 1642 of [10]:

{\it Efficient spectral algorithms exist for the computation of the selection probabilities of stack filters.}  

The article cited in [10] is [15] which offers Boolean derivatives and weighted Chow parameters but no computational evidence of the feasibility of the proposed intricate method. Similarly [16] which reduces the calculations of the RSPs of stack filters of window size $n$ to the corresponding problem for size $n-1$ offers no computational data; its complexity in theory (and likely in practise) is $O(n!)$. In the same vein no numerical experiments are carried out in [17]. It is evident that none of the approaches [15], [16], [17] (and in fact none that handles the models of a Boolean function one by one) scales up to [14].


\begin{thebibliography}{99}

\bibitem{ButlerMSc} P.W. Butler, The transfer of distributions of LULU
smoothers, MSc Thesis, University of Stellenbosch 2008 (directed by C. Rohwer and M. Wild). Accessible with the Search Function at http://scholar.sun.ac.za/

\bibitem{Conradie}W. J. Conradie, T de Wet and M Jankowitz,
{Exact and asymptotic distributions of LULU smoothers},
    {\it Journal of Computational and Applied Mathematics} 186 (2006) 253-267.


\bibitem{Heijmans}H.J.A.M. Heijmans, Composing morphological filters,
{\it IEEE Transactions on Image Processing} 6 (1997) 713-723.

\bibitem{Mallows} C.L. Mallows, Some theory of nonlinar smoothers, Annals of Statistics 8 (1980) 695--715.

\bibitem{Nodes} T.A. Nodes, N.C. Gallagher, Median filters: Some
properties and their modifications, IEEE Transactions on
Acoustics, Speach, Signal Processing 30 (1982) 739--746.

\bibitem{RohwerQM2002} C.H. Rohwer, Variation reduction and
$LULU$-smoothing, Quaestiones Mathematicae 25 (2002)
163--176.

\bibitem{RohwerQM2004} C.H. Rohwer, Fully trend preserving operators,
Quaestiones Mathematicae 27 (2004) 217--230.


\bibitem{Rohwerbook} C.H. Rohwer, Nonlinear Smoothers and Multiresolution
Analysis, Birkh\"{a}user, 2005.

\bibitem{RohwerWild} C. Rohwer, M. Wild, LULU-theory, idempotent stack filters,
and the mathematics of vision of Marr, Advances in  Imaging and
Electron Physics 146 (2007) 57--162.


\bibitem{ } I. Shmulevich, O. Yli-Harja, J. Astola, A. Korshunov, On the robustness of the class of stack filters, IEEE Transactions on Signal Processing 50 (2002) 1640-1649.

\bibitem{Serra82} J. Serra, Image Analysis and Mathematical
Morphology, Academic Press, London, 1982.

\bibitem{Serra88} J. Serra, Image Analysis and Mathematical
Morphology, Volume II: Theoretical Advances, J. Serra (Ed.),
Academic Press, London, 1988.

\bibitem{Soille} P. Soille, Morphological Image Analysis,
    Springer 1999.

\bibitem{ } M. Wild, Computing the output distribution and selection probabilities of a stack filter from the DNF of its positive Boolean function, Journal of Mathematical Imaging and Vision 46 (2013) 66-73.


\bibitem{ } K. Egiazarian, P. Kuosmanen and J. Astola, ``Boolean derivatives, weighted chow parameters and selection probabilities of stack filters,'' IEEE Transactions Signal Processing, vol.44, pp.1634-1641, July 1996.

\bibitem{ } L. Lin, G.B. Adams, E.J. Coyle, Stack filter lattices, Signal Processing 38 (1994) 277-297.

\bibitem{ } M.K. Prasad, Stack filter design using selection probabilities, IEEE Transactions on signal processing, vol.53, no.3, March 2005
\end{thebibliography}
\end{document}